\documentclass[a4paper,reqno]{amsart}

\usepackage{amssymb,amsmath}

\usepackage{paralist,enumitem}
\setlength{\parindent}{0pt}
\setlength{\parskip}{6pt}
\setlength{\normalparindent}{0pt} 
\setlength{\plparsep}{1pt}
\setlength{\plitemsep}{3pt}
\setlist{listparindent=0pt,parsep=3pt}
\setdefaultenum{1.}{}{}{}

\usepackage[pdftex,linktocpage,pdfstartview=FitH,colorlinks=true,allcolors=blue]{hyperref}
\usepackage{amsthm} 
\usepackage[capitalise]{cleveref}

\usepackage[initials,msc-links]{amsrefs}[2007/10/22]

\newtheorem{thm}{Theorem}[section]
\newtheorem{prop}[thm]{Proposition}
\newtheorem{cor}[thm]{Corollary}
\newtheorem{lem}[thm]{Lemma}
\newtheorem*{proposition*}{Proposition}
\newtheorem*{theorem*}{Theorem}
\theoremstyle{definition}

\newtheorem*{defn}{Definition}

\newtheorem*{xmpl}{Example}

\theoremstyle{remark}
\newtheorem*{rem}{Remark}
\newtheorem*{rems}{Remarks}

\numberwithin{equation}{section}


\newcommand{\R}{\mathbb{R}}



\DeclareMathOperator{\End}{End}
\DeclareMathOperator{\Hom}{Hom}

\newcommand{\I}{\mathrm{I}}
\newcommand{\Span}[1]{\operatorname{span}\{#1\}}
\newcommand{\trace}{\operatorname{trace}}
\DeclareSymbolFont{script}{U}{eus}{m}{n}
\DeclareSymbolFontAlphabet{\mathscr}{script}
\DeclareMathSymbol{\EuWedge}{0}{script}{"5E}
\newcommand{\Wedge}{\EuWedge}
\renewcommand{\d}{\mathop{}\!\mathrm{d}}
\newcommand{\tens}{\otimes}
\newcommand{\Dif}{\mathrm{D}}

\DeclareMathOperator{\rSL}{SL}

\newcommand{\fo}{\mathfrak{o}}
\newcommand{\fso}{\mathfrak{so}}

\newcommand{\fe}{\mathfrak{e}}
\newcommand{\conf}{\mathfrak{c}}

\newcommand{\g}{\mathfrak{g}}

\renewcommand{\q}{\mathfrak{q}}

\newcommand{\Ad}{\operatorname{Ad}}

\newcommand{\II}{\mathrm{I\kern-0.5ptI}}
\renewcommand{\I}{\mathrm{I}}
\newcommand{\mcv}{\mathbf{H}}
\newcommand{\sect}[1]{\Gamma\,#1}


\newcommand{\st}{\mathrel{|}}
\newcommand{\set}[1]{\{#1\}}
\newcommand{\vol}{\mathrm{vol}}
\newcommand{\restr}[1]{{}_{|#1}}
\renewcommand{\Re}{\operatorname{Re}}
\newcommand{\ev}[1]{\langle#1\rangle}
\newcommand{\stab}{\mathrm{stab}}
\renewcommand{\lor}[1][4,1]{\R^{#1}}
\newcommand{\cL}{\mathcal{L}}
\newcommand{\ip}[1]{(#1)}
\newcommand{\B}[1]{(#1)_{\g}}
\newcommand{\Y}{Y}
\newcommand{\Sym}{\operatorname{Sym}}
\newcommand{\dual}[1]{\check{#1}}
\newcommand{\xh}{\dual{x}}
\DeclareMathOperator{\Diff}{Diff}

\newcommand{\abs}[1]{\lvert#1\rvert}

\newcommand{\half}{\tfrac12}


\newcommand{\TitleWithUrl}[1]{\IfEmptyBibField{doi}%
  {\IfEmptyBibField{url}{\textit{#1}}%
    {\IfEmptyBibField{eprint}{\href {\BibField{url}}{\textit{#1}}}{\textit{#1}}}%
    }%
  {\href {https://doi.org/\BibField{doi}}{\textit{#1}}}}
\renewcommand{\eprint}[1]{\IfEmptyBibField{url}{\url{#1}}%
  {\href {\BibField{url}}{#1}}}

\BibSpec{article}{%
    +{}  {\PrintAuthors}                {author}
    +{,} { \TitleWithUrl}               {title}
    +{.} { }                            {part}
    +{:} { \textit}                     {subtitle}
    +{,} { \PrintContributions}         {contribution}
    +{.} { \PrintPartials}              {partial}
    +{,} { }                            {journal}
    +{}  { \textbf}                     {volume}
    +{}  { \PrintDatePV}                {date}
    +{,} { \issuetext}                  {number}
    +{,} { \eprintpages}                {pages}
    +{,} { }                            {status}
    +{,} { available at \eprint}        {eprint}
    +{}  { \parenthesize}               {language}
    +{}  { \PrintTranslation}           {translation}
    +{;} { \PrintReprint}               {reprint}
    +{.} { }                            {note}
    +{.} {}                             {transition}
    +{}  {\SentenceSpace \PrintReviews} {review}
}

\BibSpec{collection.article}{%
    +{}  {\PrintAuthors}                {author}
    +{,} { \TitleWithUrl}                     {title}
    +{.} { }                            {part}
    +{:} { \textit}                     {subtitle}
    +{,} { \PrintContributions}         {contribution}
    +{,} { \PrintConference}            {conference}
    +{}  {\PrintBook}                   {book}
    +{,} { }                            {booktitle}
    +{,} { \PrintDateB}                 {date}
    +{,} { pp.~}                        {pages}
    +{,} { }                            {status}
    +{,} { available at \eprint}        {eprint}
    +{}  { \parenthesize}               {language}
    +{}  { \PrintTranslation}           {translation}
    +{;} { \PrintReprint}               {reprint}
    +{.} { }                            {note}
    +{.} {}                             {transition}
    +{}  {\SentenceSpace \PrintReviews} {review}
}
\BibSpec{book}{%
    +{}  {\PrintPrimary}                {transition}
    +{,} { \TitleWithUrl}               {title}
    +{.} { }                            {part}
    +{:} { \textit}                     {subtitle}
    +{,} { \PrintEdition}               {edition}
    +{}  { \PrintEditorsB}              {editor}
    +{,} { \PrintTranslatorsC}          {translator}
    +{,} { \PrintContributions}         {contribution}
    +{,} { }                            {series}
    +{,} { \voltext}                    {volume}
    +{,} { }                            {publisher}
    +{,} { }                            {organization}
    +{,} { }                            {address}
    +{,} { \PrintDateB}                 {date}
    +{,} { }                            {status}
    +{}  { \parenthesize}               {language}
    +{}  { \PrintTranslation}           {translation}
    +{;} { \PrintReprint}               {reprint}
    +{.} { }                            {note}
    +{.} {}                             {transition}
    +{}  {\SentenceSpace \PrintReviews} {review}
}

\BibSpec{thesis}{%
    +{}  {\PrintAuthors}                {author}
    +{.} { \PrintDate}                  {date}
    +{.} { \TitleWithUrl}               {title}
    +{:} { \textit}                     {subtitle}
    +{,} { \PrintThesisType}            {type}
    +{,} { }                            {organization}
    +{,} { }                            {address}
    +{,} { \eprint}                     {eprint}
    +{,} { }                            {status}
    +{}  { \parenthesize}               {language}
    +{}  { \PrintTranslation}           {translation}
    +{;} { \PrintReprint}               {reprint}
    +{.} { }                            {note}
    +{.} {}                             {transition}
    +{}  {\SentenceSpace \PrintReviews} {review}
}

\title{Isothermic surfaces and conservation laws}

\author{F.E. Burstall}
\address{Department of Mathematical Sciences\\ University of Bath\\
  Bath BA2 7AY\\UK}
\email{feb@maths.bath.ac.uk}

\author{E. Carberry}
\address{%
School of Mathematics and Statistics\\The University of Sydney\\NSW 2006\\Australia}
\email{emma.carberry@sydney.edu.au}

\author{U. Hertrich-Jeromin}
\address{%
  Institute of Discrete Mathematics and Geometry\\ 
  TU Wien\\
  Wiedner Hauptstrasse 8-10/104\\
  1040 Wien\\
  Austria}
\email{udo.hertrich-jeromin@tuwien.ac.at}

\author{F. Pedit}
\address{Department of Mathematics and Statistics\\ University of Massachusetts Amherst\\
  Amherst, MA 01030\\USA}
\email{pedit@math.umass.edu}

\dedicatory{In memory of Joe Wolf}

\thanks{We thank MATRIX for their hospitality and for
  creating an environment so conducive to fruitful
  collaboration. The first and fourth authors further
  gratefully acknowledge support from MATRIX and the Simons
  Foundation in the form of a Travel Grant and Family
  Funding, respectively. The third and fourth authors were
  partially supported by the International Visitor Programme
  of the Sydney Mathematical Research Institute (SMRI) and
  thank the members of the Institute for their welcoming and
  inspiring research environment.}
\begin{document}

\begin{abstract}
  For CMC surfaces in $3$-dimensional space forms, we relate
  the moment class of Korevaar--Kusner--Solomon to a second
  cohomology class arising from the integrable systems
  theory of isothermic surfaces.  In addition, we show that
  both classes have a variational origin as Noether
  currents.
\end{abstract}
\maketitle

\section{Introduction}
\label{sec:introduction}

The purpose of this paper is to record and explore a puzzling
coincidence that we observed during a MATRIX workshop.  The
setting is classical differential geometry of surfaces and
the coincidence relates two Lie algebra valued cohomology
classes associated to a constant mean curvature (CMC)
surface in a $3$-dimensional space form.

The first of these classes is the \emph{moment class} of
Kusner and his collaborators \cite{MR1081331,KorKus93,KorKusSol89} of
which the following is a paradigm: let $\Sigma\subset\R^3$
have constant mean curvature $H$, $Y$ be a Killing
field on $\R^3$ and $\gamma$ a $1$-cycle in $\Sigma$
bounding a $2$-cycle $D$ in $\R^3$.  Then
\begin{equation*}
  \int_{\gamma}\ip{Y,\nu}-2H\int_D\ip{Y,N}
\end{equation*}
\emph{depends only on the homology class of $\gamma$}.  Here
$\nu,N$ are appropriate unit normals to $\gamma$ in $\Sigma$
and $D$ in $\R^3$, respectively.  Letting $Y$ vary in the
Lie algebra $\g_{\R^3}$ of Killing fields, we arrive at a
cohomology class $\mu\in H^1(\Sigma,\R)\tens\g_{\R^3}^{*}$,
the \emph{moment class}.  This has been used to demonstrate
the necessity of the balancing conditions that Kapouleas
\cite{Kap90} needed in his celebrated existence theorem for
CMC surfaces.  A similar analysis can be carried out for CMC
immersed hypersurfaces in any Riemannian manifold $M^{k+1}$ under mild
topological conditions to produce $\mu\in
H^{k-1}(\Sigma,\R)\tens\g_{M}^{*}$, with $\g_M$ the Killing
algebra of $M$.

Returning to the $\R^3$ case, Meeks--P\'erez--Tinaglia
\cite{MR3525098} observe that the restriction
$\mu\restr{\R^3}$ to the constant vector fields has a
de~Rham representative given by the \emph{flux form}:
\begin{equation*}
  \d x\times (N+Hx)\in\Omega_{\Sigma}^1\tens\R^{3}
\end{equation*}
where $x\colon\Sigma\to\R^3$ is the inclusion and $N$ the normal
to $x$.  The starting point of the present paper is the
observation that the flux form essentially appears as a component
of another closed Lie algebra valued form, the
\emph{retraction form}, which is the corner-stone of the
modern theory of isothermic surfaces.

Recall that an immersed surface $x\colon\Sigma\to\R^3$ is
\emph{isothermic} if, away from umbilics, it locally admits
conformal coordinates $u,v$ which are simultaneously
curvature line coordinates.  This amounts to the local
existence of holomorphic quadratic differentials that
commute with the second fundamental form $\II$ of $x$.  We
say that $x$ is globally isothermic \cite{Smy04} if there is
a globally defined holomorphic quadratic differential
$q\in H^0(\Sigma,K^2)$ which so commutes.  CMC surfaces are
globally isothermic since then $\II^{2,0}$ is holomorphic by
the Codazzi equation.

Globally isothermic surfaces comprise an integrable system
\cite{CieGolSym95}.  This can, in large part, be understood
as a consequence of the existence of a pencil of flat
connections with gauge potential $\eta$ which is a closed
$1$-form with values in the Lie algebra $\conf_{\R^3}$ of
\emph{conformal} vector fields on $\R^3$.  This
$\eta\in\Omega^1_{\Sigma}\tens\conf_{\R^3}$, which is
constructed from $x$ and $q$, is the retraction form.  When
$x$ is a CMC surface, the component of $\eta$ in
$\fso(3)\cong\Wedge^2\R^3$, the infinitesimal rotations
about $0$, is cohomologous to
\begin{equation*}
  \d x\wedge(N+Hx)
\end{equation*}
which coincides with the flux form after using the cross
product to identify $\Wedge^2\R^3$ with $\R^3$.  The alert
reader will spot that something unusual is going on since
we are identifying the rotational part of $\eta$ with the
(dual of) the translational part of $\mu$.

In our attempts to understand this, we arrived at the
following context and results.  Let $M$ be a $3$-dimensional
space form with $\g_M$ the Lie algebra of Killing fields on
$M$ and $\conf_M$ the Lie algebra of conformal vector
fields so that $\g_M\leq\conf_{M}$.  Our first observation is:

\emph{There is an isomorphism $S\colon\g_M\cong\g_M^{*}$ which is
  equivariant for the action of the isometry group of $M$.}

Now let $x\colon\Sigma\to M$ be a CMC, and therefore isothermic,
surface with moment class
$\mu\in H^1(\Sigma,\R)\tens\g_M^{*}$ and retraction class
$[\eta]\in H^1(\Sigma,\R)\tens\conf_M$.  Then
\begin{compactenum}
\item \emph{$[\eta]$ takes values in $\g_M$.}
\item $S[\eta]=\mu$.
\end{compactenum}
As corollaries of this analysis, we find that parallel pairs of CMC
surfaces in space forms have the same moment class and
we provide an explicit de~Rham representative of the
moment class which generalises the flux form.

Here is the plan of the paper.  We describe the moment class
and retraction form in Sections \ref{sec:cmc-cons-laws} and
\ref{sec:isothermic-surfaces} respectively.  In Section
\ref{sec:cons-laws-again} we prove our main results.
However, while we provide complete proofs, we lack a
conceptual understanding of \emph{why} our results are true.
In Section~\ref{sec:noethers-theorem}, we attempt a partial
resolution by showing that both moment and retraction
classes have a variational origin as Noether currents.  That
said, a new idea is required to relate the variational
problems involved.

Finally: it was an honour to participate in Joe Wolf's final
workshop. The generosity and gentleness with which he shared
his wisdom was a delight to witness.

\section{A conservation law for CMC hypersurfaces}
\label{sec:cmc-cons-laws}

We begin by setting out an approach to the result of
Korevaar--Kusner--Solomon \cite{KorKusSol89}.

Let $x\colon\Sigma^k\to M^n$ be an immersion of an oriented
$k$-manifold into an oriented Riemannian $n$-manifold.  We
denote the metric on $M$ by $\ip{\,,\,}$.  Equip $\Sigma$
with the induced metric $\I=\ip{\d x,\d x}$ and volume form
$\vol_{x}\in\Omega^k_{\Sigma}$.  With $N\Sigma$ the normal
bundle of $x$, we have an orthogonal decomposition
\begin{equation*}
  x^{*}TM=\d x(T\Sigma)\oplus N\Sigma
\end{equation*}
with a corresponding decomposition of vector fields along
$x$:
\begin{equation*}
  X\circ x=\d x(X^{\top})+X^{\perp},
\end{equation*}
for $X\in\sect{TM}$.  Let
$\II\in\sect{S^2T^{*}\Sigma\tens N\Sigma}$ be the second
fundamental form of $x$ and
$\mcv=\frac1k\trace\II\in\sect{N\Sigma}$ the mean curvature
vector.

With this in hand, we have
\begin{lem}
  \label{th:1}
  For $Y$ a Killing field on $M$,
  \begin{equation}
    \label{eq:1}
  \d i_{Y^{\top}}\vol_x=k(Y\circ x,\mcv)\vol_x.  
  \end{equation}
\end{lem}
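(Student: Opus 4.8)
The plan is to reduce the identity to the classical formula for the surface divergence of $Y^{\top}$ and then to exploit the Killing hypothesis. Since $\vol_x$ is a top-degree form on $\Sigma$ we have $\d\vol_x=0$, so Cartan's formula gives
\begin{equation*}
  \d\,i_{Y^{\top}}\vol_x=\cL_{Y^{\top}}\vol_x=(\operatorname{div}_{\Sigma}Y^{\top})\,\vol_x,
\end{equation*}
where $\operatorname{div}_{\Sigma}$ is computed with respect to the induced metric $\I$. Everything therefore comes down to the pointwise identity $\operatorname{div}_{\Sigma}Y^{\top}=k\,\ip{Y\circ x,\mcv}$.

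To compute the divergence I would fix a local orthonormal frame $e_1,\dots,e_k$ of $T\Sigma$ and set $X_i=\d x(e_i)$, so that $\operatorname{div}_{\Sigma}Y^{\top}=\sum_i\ip{\nabla^{\Sigma}_{e_i}Y^{\top},e_i}$. The idea is to recover $\nabla^{\Sigma}Y^{\top}$ as the tangential part of the ambient derivative $\nabla^{M}_{X_i}(Y\circ x)$. Differentiating the decomposition $Y\circ x=\d x(Y^{\top})+Y^{\perp}$ along $X_i$ and applying the Gauss and Weingarten formulae, the tangential component of $\nabla^{M}_{X_i}(Y\circ x)$ is $\d x\bigl(\nabla^{\Sigma}_{e_i}Y^{\top}-A_{Y^{\perp}}e_i\bigr)$, where $A_{Y^{\perp}}$ is the shape operator in the normal direction $Y^{\perp}$. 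Pairing with $X_i$ and summing then yields
\begin{equation*}
  \sum_i\ip{\nabla^{M}_{X_i}(Y\circ x),X_i}=\operatorname{div}_{\Sigma}Y^{\top}-\sum_i\ip{A_{Y^{\perp}}e_i,e_i}.
\end{equation*}

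I would dispose of the two remaining sums as follows. For the shape-operator term, the defining relation $\ip{A_{Y^{\perp}}e_i,e_j}=\ip{\II(e_i,e_j),Y^{\perp}}$ gives $\sum_i\ip{A_{Y^{\perp}}e_i,e_i}=\ip{\trace\II,Y^{\perp}}=k\,\ip{\mcv,Y^{\perp}}$, and since $\mcv$ is normal this equals $k\,\ip{\mcv,Y\circ x}$. The left-hand sum is where the hypothesis that $Y$ is Killing enters: the Killing equation asserts that $Z\mapsto\nabla^{M}_{Z}Y$ is skew, so $\ip{\nabla^{M}_{X_i}Y,X_i}=0$ for each $i$, whence $\sum_i\ip{\nabla^{M}_{X_i}(Y\circ x),X_i}=0$. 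Combining the two computations gives $\operatorname{div}_{\Sigma}Y^{\top}=k\,\ip{Y\circ x,\mcv}$, as required.

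I expect the main care to lie in the bookkeeping of the tangential/normal splitting—in particular, checking that the Weingarten term $A_{Y^{\perp}}$ is the only surviving tangential contribution from $\nabla^{M}_{X_i}Y^{\perp}$ and that it assembles precisely into the trace of $\II$. The use of the Killing property is brief but essential: it is what forces the intrinsic ambient term to vanish and isolates the extrinsic mean-curvature contribution. The rest is routine isometric-immersion calculus.
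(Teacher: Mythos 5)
Your proposal is correct and follows essentially the same route as the paper: Cartan's formula reduces the claim to computing $\trace\nabla^{\Sigma}Y^{\top}$, which you split via the Gauss--Weingarten decomposition $\nabla^{\Sigma}Y^{\top}=(\nabla^{M}Y)^{\top}+A^{Y^{\perp}}$, killing the first trace by the skewness of $\nabla^{M}Y$ and identifying the second with $k\ip{Y\circ x,\mcv}$. The only difference is cosmetic: you carry out the trace in a local orthonormal frame, whereas the paper phrases the same decomposition invariantly at the level of operators.
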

\begin{proof}
  Using the Cartan identity we have
  \begin{equation*}
    \d i_{Y^{\top}}\vol_x=L_{Y^{\top}}\vol_x=(\trace \nabla^{\Sigma}Y^{\top})\vol_x,
  \end{equation*}
  where $\nabla^{\Sigma}$ is the Levi-Civita connection on
  $\Sigma$.  On the other hand, with $\nabla^M$ the
  (pullback of) the Levi-Civita connection on $M$ and $A$
  the shape operator, we have
  \begin{equation}\label{eq:8}
    \nabla^{\Sigma}Y^{\top}=(\nabla^MY)^{\top}+A^{Y^{\perp}}.
  \end{equation}
  Now $Y$ is Killing so $(\nabla^MY)^{\top}$ is skew and so
  trace-free while
  \begin{equation*}
    \trace A^{Y^{\perp}}=(Y\circ x,\trace\II)=k(Y\circ x,\mcv).
  \end{equation*}
\end{proof}

When $x$ is a hypersurface so that $n=k+1$, we take
$N\in\sect{N\Sigma}$ to be the unit normal for which
$x^{*}(i_{N}\vol_{M})=\vol_x$ and so define the mean
curvature $H$ by $HN=\mcv$.  Now \eqref{eq:1} reads
\begin{equation}
  \label{eq:2}
  \d i_{Y^{\top}}\vol_x=kH(Y\circ x,N)\vol_x=kHx^{*}(i_Y\vol_M).
\end{equation}

Now suppose that the de~Rham cohomology
$H^k(M,\R)=H^{k-1}(M,\R)=0$.  Then, another application of
the Cartan identity gives
\begin{equation*}
  \d i_Y\vol_M=L_Y\vol_M=0,
\end{equation*}
since $Y$ is Killing.  Our hypotheses on the cohomology of
$M$ now guarantee the existence of
$\alpha_Y\in\Omega_M^{k-1}$, unique up to the addition of an
exact $(k-1)$-form, with
\begin{equation*}
  \d\alpha_Y=i_Y\vol_M.
\end{equation*}
In this case, \eqref{eq:2} becomes
\begin{equation}
  \label{eq:3}
  \d i_{Y^{\top}}\vol_x=kH\d(x^{*}\alpha_{Y}).
\end{equation}
We therefore conclude:
\begin{prop}\label{th:2}
  Let $M^{k+1}$ be an oriented Riemannian manifold with
  $H^k(M,\R)=H^{k-1}(M,\R)=0$, $Y$ a Killing field on $M$
  and $x\colon\Sigma^k\to M^{k+1}$ be an immersed oriented
  hypersurface with \emph{constant} mean curvature $H$.
  Then
  \begin{equation}
    \label{eq:4}
    \d(i_{Y^{\top}}\vol_x-kHx^{*}\alpha_{Y})=0.
  \end{equation}
\end{prop}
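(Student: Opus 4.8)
The plan is to assemble the proposition directly from \cref{th:1} together with the topological hypotheses, the only genuine geometric input being the constancy of $H$. First I would specialise \cref{th:1} to the hypersurface case $n=k+1$: writing $\mcv=HN$ and using the defining relation $x^{*}(i_N\vol_M)=\vol_x$ for the unit normal, equation \eqref{eq:1} becomes \eqref{eq:2}, namely $\d i_{Y^{\top}}\vol_x=kH\,x^{*}(i_Y\vol_M)$.

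Next I would check that $i_Y\vol_M$ is closed and admits a primitive. Since $\vol_M$ is a top-degree form on $M^{k+1}$ it is automatically closed, so the Cartan identity gives $L_Y\vol_M=\d i_Y\vol_M$; as $Y$ is Killing the left-hand side vanishes, whence $i_Y\vol_M$ is a closed $k$-form. The hypothesis $H^k(M,\R)=0$ then produces $\alpha_Y\in\Omega_M^{k-1}$ with $\d\alpha_Y=i_Y\vol_M$, while $H^{k-1}(M,\R)=0$ pins $\alpha_Y$ down up to an exact form (two primitives differ by a closed, hence exact, $(k-1)$-form). Pulling back along $x$ and using that $x^{*}$ commutes with $\d$ converts \eqref{eq:2} into \eqref{eq:3}, $\d i_{Y^{\top}}\vol_x=kH\,\d(x^{*}\alpha_Y)$.

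The final step — and the one carrying the real hypothesis — is to fold the constant $kH$ inside the exterior derivative. Because $H$ is constant, $kH\,\d(x^{*}\alpha_Y)=\d(kH\,x^{*}\alpha_Y)$, and rearranging \eqref{eq:3} yields $\d(i_{Y^{\top}}\vol_x-kH\,x^{*}\alpha_Y)=0$, as claimed. I expect no serious obstacle, since the substantive geometry is already packaged in \cref{th:1} and the cohomological vanishing merely guarantees that $\alpha_Y$ exists. The one point to handle with care is precisely this last manoeuvre: were $H$ non-constant, an extra term $\d(kH)\wedge x^{*}\alpha_Y$ would survive and the form would fail to be closed, so constancy of the mean curvature is exactly the condition that makes the conservation law hold.
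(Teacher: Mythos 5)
Your proposal is correct and follows exactly the paper's own derivation: specialise \cref{th:1} to the hypersurface case to get $\d i_{Y^{\top}}\vol_x=kH\,x^{*}(i_Y\vol_M)$, use the Cartan identity and the Killing condition to see that $i_Y\vol_M$ is closed, invoke $H^k(M,\R)=H^{k-1}(M,\R)=0$ to produce $\alpha_Y$ (unique modulo exact forms), and finally use the constancy of $H$ to absorb $kH$ into the exterior derivative. Your closing remark correctly isolates where constancy of the mean curvature enters; there is nothing to add.
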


Denote by $\mu_x(Y)\in H^{k-1}(\Sigma,\R)$ the cohomology
class of $i_{Y^{\top}}\vol_x-kHx^{*}\alpha_{Y}$ and let $\g_M$
denote the Lie algebra of Killing fields of $M$.  Observe
that the uniqueness of $\alpha_Y$ modulo exact forms ensures
that $Y\mapsto\alpha_{Y}$ induces a linear map
$\g_{M}\to\Omega^{k-1}_M/\d\Omega_M^{k-2}$ so that
$Y\mapsto \mu_x(Y)$ is also a linear map defining the
\emph{moment class}
\begin{equation}\label{eq:21}
  \mu_x\in H^{k-1}(\Sigma,\R)\tens \g_{M}^{*}.
\end{equation}
Let $G_M$ be the isometry group of $M$ and $g\in G_M$.  For
$p\in M$, we have
\begin{equation}\label{eq:5}
  Y(gp)=\d_p g(\Ad(g)^{-1}Y)(p)
\end{equation}
so that
\begin{equation*}
  i_{\Ad(g)^{-1}Y}\vol_M=g^{*}(i_Y\vol_M).
\end{equation*}
It follows at once that
\begin{equation*}
  \alpha_{\Ad(g)^{-1}Y}\equiv g^{*}\alpha_Y\mod\d\Omega^{k-2}_{M}.
\end{equation*}
Again from \eqref{eq:5}, we deduce that
\begin{equation*}
  i_{(\Ad(g)^{-1}Y)^{\top}}\vol_x=i_{Y^{\top}}\vol_{gx}
\end{equation*}
and so finally that
\begin{equation*}
  \mu_{gx}(Y)=\mu_x(\Ad(g)^{-1}Y).
\end{equation*}
Otherwise said:
\begin{prop}[c.f.\ \cite{MR1081331}]\label{th:6}
  The moment class is equivariant for the co-adjoint action
  of $G_M$ on $\g_M^{*}$:
  \begin{equation*}
    \mu_{gx}=\Ad^{*}(g)\mu_{x},
  \end{equation*}
  for all $g\in G_M$.
\end{prop}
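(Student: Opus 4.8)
The plan is to transport the defining cocycle $i_{Y^{\top}}\vol_x-kHx^{*}\alpha_Y$ under $g$ one term at a time and then recognise the outcome as coadjoint equivariance. The sole geometric input needed is the relation \eqref{eq:5}, which says that the fixed Killing field $Y$, evaluated at $gp$, is $\d_p g$ applied to $\Ad(g)^{-1}Y$ at $p$; everything else is bookkeeping performed at the level of de~Rham classes, so that the ambiguity of $\alpha_Y$ modulo $\d\Omega^{k-2}_M$ is harmless.

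For the potential term, I would start from $g^{*}(i_Y\vol_M)=i_{\Ad(g)^{-1}Y}\vol_M$, which is immediate from \eqref{eq:5}, and deduce by uniqueness of primitives modulo exact forms that $\alpha_{\Ad(g)^{-1}Y}\equiv g^{*}\alpha_Y$; pulling back by $x$ and using $(gx)^{*}=x^{*}g^{*}$ gives $x^{*}\alpha_{\Ad(g)^{-1}Y}\equiv(gx)^{*}\alpha_Y$ modulo exact forms on $\Sigma$. For the tangential term, the point to watch is that $\d g$ preserves the orthogonal splitting $x^{*}TM=\d x(T\Sigma)\oplus N\Sigma$, so that taking tangential parts is compatible with the transport in \eqref{eq:5}; this yields $i_{(\Ad(g)^{-1}Y)^{\top}}\vol_x=i_{Y^{\top}}\vol_{gx}$, the equality of volume forms following since $g$ is an orientation-preserving isometry. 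Assembling the two terms at the level of classes gives $\mu_x(\Ad(g)^{-1}Y)=\mu_{gx}(Y)$ for every $Y\in\g_M$.

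The final step is purely formal: by the definition of the coadjoint representation one has $(\Ad^{*}(g)\mu_x)(Y)=\mu_x(\Ad(g)^{-1}Y)$, so the identity just obtained is precisely $\mu_{gx}=\Ad^{*}(g)\mu_x$. I expect the only real obstacle to be keeping the variance consistent — checking that it is $\Ad(g)^{-1}$ rather than $\Ad(g)$ that appears, as forced by \eqref{eq:5} and matched by the convention that makes $\Ad^{*}$ a left action — together with the routine but essential observation that the tangential projection is compatible with the isometry, so that the whole computation descends to cohomology.
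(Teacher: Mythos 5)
Your proposal is correct and follows essentially the same route as the paper: the paper also derives $\alpha_{\Ad(g)^{-1}Y}\equiv g^{*}\alpha_Y$ from $i_{\Ad(g)^{-1}Y}\vol_M=g^{*}(i_Y\vol_M)$ via \eqref{eq:5}, obtains $i_{(\Ad(g)^{-1}Y)^{\top}}\vol_x=i_{Y^{\top}}\vol_{gx}$ from the same relation, and assembles these into $\mu_{gx}(Y)=\mu_x(\Ad(g)^{-1}Y)$. Your added remarks on the compatibility of the tangential projection with $\d g$ and on the variance conventions are just explicit versions of steps the paper leaves implicit.
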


As an application of these ideas, we recover the following
conservation law of Korevaar--Kusner--Solomon:
\begin{thm}[\citelist{\cite{MR1081331} \cite{KorKusSol89}*{Remark
    3.17} \cite{MR3525098}*{Theorem~3.4}}]\label{th:3}
  In the situation of \cref{th:2}, further suppose that
  $\Sigma$ is the boundary of a $(k+1)$-manifold $\Omega$
  and that $x$ extends to a local diffeomorphism
  $\hat{x}\colon\Omega\to M$.

  Let $\gamma$ be a hypersurface in
  $\Sigma$ and $D$ a hypersurface in $\Omega$ with
  $\partial D=\gamma$.  Then
  \begin{equation*}
    \int_{\gamma}(Y,\nu)\vol_{x\restr\gamma}-
    kH\int_{D}(Y,N)\vol_{\hat{x}\restr D}
  \end{equation*}
  depends only on the homology class of $\gamma$.  Here
  $\nu$ is the normal to $\gamma$ in $\Sigma$ and $N$ the
  normal to $\hat{x}\restr{D}$.
\end{thm}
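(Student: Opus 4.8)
The plan is to recognise the displayed expression as the integral over $\gamma$ of the closed $(k-1)$-form
\begin{equation*}
  \omega_Y:=i_{Y^{\top}}\vol_x-kH\,x^{*}\alpha_Y
\end{equation*}
furnished by \cref{th:2}, whose cohomology class is $\mu_x(Y)$. Granted this, homological invariance is immediate: since $\omega_Y$ is closed, Stokes' theorem gives $\int_{\gamma'}\omega_Y-\int_{\gamma}\omega_Y=\int_{B}\d\omega_Y=0$ whenever $\gamma'-\gamma=\partial B$ bounds a $k$-chain $B$ in $\Sigma$, so the value depends only on $[\gamma]\in H_{k-1}(\Sigma,\R)$. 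It therefore remains to match the two terms.

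For the first term I would use that $\nu$ is tangent to $\Sigma$: thus $Y^{\perp}$ contributes nothing and $(Y,\nu)=(\d x(Y^{\top}),\d x(\nu))=\I(Y^{\top},\nu)$. Contracting $\vol_x$ with the tangential field $Y^{\top}$ and restricting to $\gamma$ then yields the standard identity $(i_{Y^{\top}}\vol_x)\restr{\gamma}=\I(Y^{\top},\nu)\,\vol_{x\restr\gamma}$, so that $\int_{\gamma}i_{Y^{\top}}\vol_x=\int_{\gamma}(Y,\nu)\,\vol_{x\restr\gamma}$.

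For the second term I would exploit the extension $\hat{x}$. Since $x=\hat{x}\restr{\Sigma}$ and $\partial D=\gamma$, Stokes' theorem together with the defining relation $\d\alpha_Y=i_Y\vol_M$ gives
\begin{equation*}
  \int_{\gamma}x^{*}\alpha_Y=\int_{\partial D}\hat{x}^{*}\alpha_Y=\int_{D}\hat{x}^{*}(\d\alpha_Y)=\int_{D}\hat{x}^{*}(i_Y\vol_M).
\end{equation*}
Because $\hat{x}$ is a local diffeomorphism, $\hat{x}\restr{D}$ is an immersed oriented hypersurface in $M$ with unit normal $N$, and the same contraction identity as above---now applied on $D$ to the hypersurface $\hat{x}\restr D$---gives $\hat{x}^{*}(i_Y\vol_M)\restr{D}=(Y,N)\,\vol_{\hat{x}\restr D}$. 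Hence $-kH\int_{\gamma}x^{*}\alpha_Y=-kH\int_{D}(Y,N)\,\vol_{\hat{x}\restr D}$, and summing the two terms identifies the displayed expression with $\int_{\gamma}\omega_Y$, as required.

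The only real care needed is the bookkeeping of orientations. I must fix the orientation of $\gamma$ as a hypersurface in $\Sigma$---which determines both $\nu$ and $\vol_{x\restr\gamma}$---so that it agrees with the orientation of $\gamma$ as $\partial D$, and I must check that the normal $N$ to $\hat{x}\restr D$ is the one compatible with the earlier convention $(\hat{x}\restr D)^{*}(i_N\vol_M)=\vol_{\hat{x}\restr D}$. These are the points at which a sign could slip; once the conventions for $\nu$, $N$ and the induced volumes are chosen consistently, the two Stokes computations combine with the correct relative sign and the proof is complete.
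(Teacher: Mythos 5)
Your proposal is correct and follows essentially the same route as the paper: recognise the two integrands as $i_{Y^{\top}}\vol_x$ on $\gamma$ and $\hat{x}^{*}(i_Y\vol_M)=\d(\hat{x}^{*}\alpha_Y)$ on $D$, apply Stokes' theorem to convert the $D$-integral into $\int_{\gamma}x^{*}\alpha_Y$, and conclude that the whole expression is the evaluation of the closed form representing $\mu_x(Y)$ on $\gamma$, whence homological invariance. Your extra care about orientation conventions and the pointwise identities $(i_{Y^{\top}}\vol_x)\restr{\gamma}=(Y,\nu)\vol_{x\restr\gamma}$ is just a more explicit spelling-out of what the paper states in one line.
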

\begin{proof}
  We recognise the integrands:
  $(Y,\nu)\vol_{x\restr\gamma}=i_{Y^{\top}}\vol_{x}$ on $\gamma$
  while $(Y,N)\vol_{\hat{x}\restr
    D}=\hat{x}^{*}(i_{Y}\vol_{M})=\d(\hat{x}^{*}\alpha_{Y})$
  on $D$.  Thus, by Stokes' Theorem, the integral is
  \begin{equation*}
    \int_{\gamma}i_{Y^{\top}}\vol_x-kH\int_D\d(\hat{x}^{*}\alpha_{Y}) =
    \int_{\gamma}(i_{Y^{\top}}\vol_x-kH x^{*}\alpha_Y)
  \end{equation*}
  which is simply the evaluation of $\mu_x$ on the homology
  class of $\gamma$.
\end{proof}

We would like to find an explicit closed $(k-1)$-form
representing $\mu_{x}$, or, equivalently, an explicit
representative of the coset of $\alpha_Y$ in
$\Omega_M^{k-1}/\d\Omega_M^{k-2}$.  When $M=\R^{k+1}$ this
is possible.  Indeed, it is an amusing exercise to
check
that 
\begin{equation*}
  \alpha_{Y}\equiv \begin{cases}
    \tfrac{1}{k}i_Xi_Y\vol_{\R^{k+1}} \mod \Omega^{k-2}_{\R^{k+1}},&\text{for $Y$ an infinitesimal translation;}\\
    \tfrac{1}{k+1}i_Xi_Y\vol_{\R^{k+1}} \mod \Omega^{k-2}_{\R^{k+1}},&\text{for $Y$ an
      infinitesimal rotation about $0$,}
  \end{cases}
\end{equation*}
where $X$ is the radial vector field, $X(p)=p$, for $p\in\R^{k+1}$.
However, even when $M=S^{k+1}$, this seems to be difficult.

That said, when $k=2$ and $M$ is a space-form\footnote{That
  is, $M$ is simply connected with constant sectional
  curvatures.}, we can find such a representative by
recourse to the theory of isothermic surfaces, to which we
now turn.

\section{Isothermic surfaces}
\label{sec:isothermic-surfaces}

From now on, we restrict attention to immersions
$x\colon\Sigma^2\to M^n$ of an oriented surface into an oriented
Riemannian manifold.

\begin{defn}[Isothermic surface]
  An immersion $x\colon\Sigma^2\to M^n$ is \emph{isothermic} if,
  away from umbilics, it locally admits conformal curvature
  line coordinates, thus conformal coordinates $u,v$ for
  $\I$ for which the coordinate vector fields
  $\partial/\partial u,\partial/\partial v$ are eigenvectors
  for all shape operators $A$ of $x$.
\end{defn}
\begin{rems}\item[]
  \begin{compactenum}
  \item It follows that all shape operators of an isothermic
    $x$ commute.
  \item Isothermicity is a condition on the trace-free part
    of the shape operators and so is preserved by conformal
    diffeomorphisms: if $x\colon\Sigma\to M$ is isothermic and
    $T\colon M\to \hat{M}$ is a conformal diffeomorphism then
    $T\circ x$ is also isothermic.
  \end{compactenum}
\end{rems}

The conformal structure of the induced metric $\I$ makes
$\Sigma$ into a Riemann surface, so that complex variable
methods can be applied.  This offers a different take on the
isothermic condition: for $u,v$ conformal curvature line
coordinates, $z:=u+iv$ is a holomorphic coordinate and
$q:=\d z^2$ is a holomorphic quadratic differential which
commutes with shape operators in the sense that
\begin{equation}\label{eq:6}
  Q(A^{\nu}U,V)=Q(U,A^{\nu}V),
\end{equation}
for all $U,V\in T\Sigma$ and $\nu\in N\Sigma$, where
$Q=2\Re q$.  This prompts (c.f.\ Smyth \cite{Smy04}):
\begin{defn}[Globally isothermic surface]
  An immersion $x\colon\Sigma^2\to M^n$ is \emph{globally
    isothermic} if there is a holomorphic quadratic
  differential $q\in H^0(K^2)$ for which \eqref{eq:6} holds.
  
  In this case, we say that $(x,q)$ is globally isothermic.
\end{defn}

We remark that globally isothermic surfaces are isothermic:
away from zeroes of $q$, we may locally find a holomorphic
coordinate $z$ with $q=\d z^2$.

\begin{xmpl}
  Let $x\colon\Sigma\to M^3$ take values in a $3$-dimensional
  space form.  In this setting, it is well
  known \cite[p.~137]{MR1013786} that $H$ is constant if and only if
  the \emph{Hopf differential} $\II^{2,0}$ is a holomorphic
  quadratic differential.  Thus \emph{constant mean curvature
  surfaces in $3$-dimensional space forms are globally
  isothermic with $q=\II^{2,0}$.}
\end{xmpl}

\begin{rem}
  For $M=S^n$, there are many isothermic surfaces.  Indeed,
  in this case, they comprise an integrable system
  \cite{CieGolSym95}.  However, we must confess that we know
  of no examples away from this conformally flat setting.
  It would be interesting to know if any CMC surfaces in
  $3$-manifolds with $4$-dimensional isometry group are
  (globally) isothermic (the survey of Fern\'andez--Mira
  \cite{MR2827821} is a good starting place).
\end{rem}

\begin{defn}[Retraction form]
  Let $x\colon\Sigma\to M$ be an immersion, $q\in H^0(K^2)$ and
  $Y$ a conformal vector field on $M$.  Define
  $\eta_Y\in\Omega^1_{\Sigma}$ by
  \begin{equation*}
    \eta_Y(U)=2\Re q(Y^{\top},U)
  \end{equation*}
  and so the \emph{retraction form}
  $\eta_{x}^{q}\in\Omega^1_{\Sigma}\tens\conf_{M}^{*}$, for
  $\conf_M$ the Lie algebra of conformal vector fields on
  $M$, by
  \begin{equation*}
    \eta_{x}^q(Y)=\eta_Y.
  \end{equation*}
\end{defn}
\goodbreak
\begin{rems}
\item{}
  \begin{compactenum}
  \item \emph{$\eta^q$ is equivariant for the co-adjoint action of
    the conformal diffeomorphism group of $M$}: if $g\colon M\to M$
    is a conformal diffeomorphism of $M$ then
    \begin{equation}
      \label{eq:7}
      \eta_{gx}^q=\Ad^{*}(g)\eta^q_x,
    \end{equation}
    thanks to \eqref{eq:5} (which holds for any Lie group
    acting on $M$).
  \item For $p\in M$, set $\stab_{\conf_M}(p)=\set{Y\in\conf_M\st
      Y(p)=0}$, the infinitesimal stabiliser of $p$. Thus,
    if $s\in\Sigma$ and $Y\in\stab_{\conf_M}(x(s))$ is a
    conformal vector field vanishing at $x(s)$,
    $\eta_{x}^q(Y)=2\Re q(Y^{\top},-)$ also
    vanishes at $s$ and we conclude that \emph{$\eta_x^q$
      takes values in the bundle $\stab_{\conf_{M}}(x)^{\circ}$ of annihilators of
      infinitesimal stabilisers along $x$}.
  \end{compactenum}
\end{rems}

Here is the point of this construction:
\begin{prop}
  \label{th:4}
  If $(x,q)\colon\Sigma\to M$ is globally isothermic then $\d\eta_x^{q}=0$.
\end{prop}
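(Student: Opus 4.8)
The plan is to show that the $\conf_M^*$-valued $1$-form $\eta_x^q$ is closed by testing it against a fixed conformal vector field $Y$ and computing $\d\eta_Y$ directly. Since $\eta_Y(U)=2\Re q(Y^\top,U)=Q(Y^\top,U)$ with $Q=2\Re q$ a real quadratic differential that commutes with the shape operators via \eqref{eq:6}, the natural strategy is to work in a local holomorphic coordinate $z=u+iv$ away from the zeros of $q$, where $q=\d z^2$ and the coordinate fields are simultaneously curvature-line and conformal. In these coordinates $Q$ is diagonal and constant-coefficient, so the exterior derivative of $\eta_Y$ reduces to derivatives of the components $Y^\top_u,Y^\top_v$ of the tangential part of $Y$, and the closedness becomes a first-order PDE to verify.

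First I would decompose $Y\circ x = \d x(Y^\top)+Y^\perp$ and differentiate. The key input is that $Y$ is a \emph{conformal} vector field, so $\nabla^M Y$ has a symmetric part proportional to the identity plus a skew part; restricting to the surface and splitting into tangential and normal components, the symmetric-traceless part of $\nabla^\Sigma Y^\top$ is controlled by the shape operator through a Gauss--Weingarten computation analogous to \eqref{eq:8}. The point of the isothermic hypothesis is precisely that $Q$ is holomorphic and commutes with $\II$: holomorphicity of $q$ gives $\bar\partial$-closedness of the $(2,0)$-part, while the commutation relation \eqref{eq:6} lets one convert the normal contribution (coming from $Y^\perp$ and the shape operator) into a term that is symmetric in the $Q$-pairing and hence drops out of the antisymmetrization defining $\d\eta_Y$.

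The cleanest organization is to write $\eta_Y = Q(Y^\top,\cdot)$ and compute $\d\eta_Y(U,V) = U\bigl(Q(Y^\top,V)\bigr)-V\bigl(Q(Y^\top,U)\bigr)-Q(Y^\top,[U,V])$ for coordinate fields $U=\partial/\partial u$, $V=\partial/\partial v$, so the bracket term vanishes. Expanding the derivatives using $\nabla^\Sigma Q = 0$ in the traceless sense (holomorphicity of $q$) leaves the pairing of $Q$ against $\nabla^\Sigma_U Y^\top$ and $\nabla^\Sigma_V Y^\top$; substituting the conformal analogue of \eqref{eq:8} splits each into an intrinsic piece and a shape-operator piece $A^{Y^\perp}$. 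The conformal part of the intrinsic piece pairs symmetrically against the traceless $Q$ and cancels, the skew part cancels by antisymmetry, and the shape-operator piece cancels using \eqref{eq:6}.

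\textbf{The main obstacle} I expect is handling the conformal (as opposed to Killing) nature of $Y$ cleanly: the symmetric part $\tfrac1n(\operatorname{div}Y)\,\id$ of $\nabla^M Y$ contributes a term proportional to $Q$ itself, and one must verify that this conformal factor pairs against the \emph{traceless} tensor $Q$ to give zero rather than a spurious multiple of the divergence. Keeping careful track of which tensors are traceless, which are symmetric, and how holomorphicity of $q$ enters (it is what guarantees the covariant-constancy needed for the first-derivative terms to combine correctly) is the delicate bookkeeping; everything else is a routine, if notationally heavy, Gauss--Weingarten calculation in conformal curvature-line coordinates, with closedness on the zero-set of $q$ following by continuity.
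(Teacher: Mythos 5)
Your proposal is correct and follows essentially the same route as the paper: write $\d\eta_Y=\ev{\d^{\nabla^\Sigma}Q,Y^{\top}}-\ev{Q\wedge\nabla^{\Sigma}Y^{\top}}$, split $\nabla^{\Sigma}Y^{\top}=(\nabla^MY)^{\top}+A^{Y^{\perp}}$ as in \eqref{eq:8}, kill the intrinsic term because the conformal-plus-skew part commutes with $J_x$ while $J_x^{*}Q=-Q$, kill the derivative term by holomorphicity of $q$ (which is $\d^{\nabla^\Sigma}Q=0$, the antisymmetrised covariant derivative, rather than full covariant constancy), and kill the shape-operator term by \eqref{eq:6}. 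The only cosmetic difference is that you work in conformal curvature-line coordinates off the zero set of $q$ and extend by continuity, whereas the paper's computation is invariant and needs no such detour.
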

\begin{proof}
  We must show that $\d\eta_Y=0$, for $Y\in\conf_M$.  For
  this, write $Q=2\Re q$ and view $Q$ as a
  $T^{*}\Sigma$-valued $1$-form.  Then
  \begin{equation*}
    \eta_Y=\ev{Q,Y^{\top}}
  \end{equation*}
  so that
  \begin{equation*}
    \d\eta_Y=\ev{\d^{\nabla^{\Sigma}}Q,Y^{\top}}-\ev{Q\wedge\nabla^{\Sigma}Y^{\top}},
  \end{equation*}
  where $\ev{\,,\,}$ is the evaluation pairing on
  $T^{*}\Sigma\times T\Sigma$ which we use to multiply
  coefficients in the wedge product.  Using \eqref{eq:8},
  this reads
  \begin{equation*}
    \d\eta_Y=\ev{\d^{\nabla^{\Sigma}}Q,Y^{\top}}
    -\ev{Q\wedge(\nabla^{M}Y)^{\top}}-\ev{Q\wedge A^{Y^{\perp}}}.
  \end{equation*}
  Since $Y$ is conformal, $(\nabla^{M}Y)^{\top}$ commutes with
  the complex structure $J_x$ of $\Sigma$ while
  $J_x^{*}Q=-Q$ so that $\ev{Q\wedge(\nabla^{M}Y)^{\top}}=0$.
  Thus, for any immersion $x$,
  \begin{equation}\label{eq:9}
    \d\eta_Y=\ev{\d^{\nabla^{\Sigma}}Q,Y^{\top}}
    -\ev{Q\wedge A^{Y^{\perp}}}.
  \end{equation}
  However, the holomorphicity of $q$ is equivalent to
  $\d^{\nabla^{\Sigma}}Q=0$ while the second summand
  vanishes by the isothermic condition \eqref{eq:6}.
\end{proof}
\begin{rem}
  When the conformal group acts transitively so that
  evaluation $\conf_M\to T_pM$ surjects for all $p\in M$, we
  see from \eqref{eq:9} that the converse also holds: if
  $\d\eta^q_x=0$ then $(x,q)$ is globally isothermic.
\end{rem}

This theory comes alive when we take $M=S^{n}$. Now
$\conf_{S^n}\cong\fso(n+1,1)$ is a simple Lie algebra and we
may use the Killing form to identify $\conf_{S^{n}}^{*}$
with $\conf_{S^n}$.  We therefore view $\eta_x^q$ as a
$\conf_{S^n}$-valued $1$-form which, in view of a remark
above, takes values in $\stab_{\conf_{S^{n}}}(x)^{\perp}\leq\conf_{S^n}$.
Moreover, for any $p\in S^n$,
$\stab_{\conf_{S^{n}}}(p)^{\perp}$ is abelian (it comprises
the infinitesimal translations of
$\R^n=S^n\setminus\set{p}$) so that $\eta_{x}^q$ takes
values in a bundle of abelian subalgebras.   From this, we
learn two things:
\begin{compactenum}
\item \emph{$(x,q)\colon\Sigma\to S^n$ is globally isothermic if and only if
  $\d+t\eta_x^q$ is a flat connection for each $t\in\R$}:
  \begin{equation*}
    R^{\d+t\eta_x^{q}}=t\d\eta_x^q+\half t^2[\eta_x^{q}\wedge\eta_x^{q}]=t\d\eta_x^q.
  \end{equation*}
  This gives an entry point to the well-developed integrable
  systems theory of isothermic surfaces, see
  \cite{BurHerPedPin97,Bur06,BurDonPedPin11,Cal03,CieGolSym95,Sch01},
  among many others, building on the classical results
  \cite{Bia05,Bia05a,Bou62a,Cal03,Dar99,Dar99b,Dar99e}, for
  more on this.
\item The key structure of $S^n$ that was needed for the
  final part of this analysis amounts to the requirement of
  a transitive action of a semisimple Lie group with
  parabolic infinitesimal stabilisers which have abelian
  nilradicals. Manifolds of this kind are the symmetric
  $R$-spaces and a surprising amount of the theory of
  isothermic surfaces goes through in this setting \cite{BurDonPedPin11}.
\end{compactenum}

\section{Conservation laws again}
\label{sec:cons-laws-again}

Let $x\colon\Sigma\to M$ be a CMC surface in a $3$-dimensional
space form.  Then $M$ can be conformally embedded as an open
subset of $S^{3}$ and, by Liouville's theorem,
$\conf_M=\conf_{S^3}$ so that, in particular,
$\g_M\leq\conf_{S^3}$.  We want to compare the moment class
\eqref{eq:21} $\mu_{x}\in H^1(\Sigma,\R)\tens\g_M^{*}$ with
the cohomology class $[\eta_{x}]$ of
$\eta_x:=\eta_x^{\II^{2,0}}$ which lies in
$H^1(\Sigma,\R)\tens\conf_{S^3}$.

Our main result is the following sequence of
observations:
\begin{compactenum}
\item There is a $G_M$-invariant symmetric bilinear form on
  $\g_M$ of signature $(3,3)$ and so a $G_M$-equivariant
  isomorphism $S\colon\g_M\to\g_M^{*}$.
\item For CMC $x\colon\Sigma\to M$, we have $[\eta_x]\in
  H^1(\Sigma,\R)\tens\g_{M}$.
\item $\mu_x=S[\eta_x]$.
\end{compactenum}
In this way, we arrive at an explicit representative of
$\mu_{x}$.

For this, we start by taking a unified approach to space
forms by viewing them as conic sections.

\subsection{Conic sections in general}
\label{sec:conic-sections}

Contemplate $\lor[n+1,1]$, an $(n+2)$-dimensional real
vector space with a symmetric bilinear form $\ip{\,,\,}$ of
signature $(n+1,1)$.  Denote by $\cL$ the lightcone of
$\lor[n+1,1]$.

Let $\g=\fso(n+1,1)$ which we shall frequently (and silently)
identify with $\Wedge^2\lor[n+1,1]$ via:
\begin{equation*}
  (a\wedge b)c=\ip{a,c}b-\ip{b,c}a,
\end{equation*}
for $a,b,c\in\lor[n+1,1]$.

Let $\q\in\lor[n+1,1]$ be non-zero, define the affine
hyperplane
\begin{equation*}
  E_{\q}:=\set{v\in\lor[n+1,1]\st \ip{v,\q}=-1}
\end{equation*}
and then the conic section $M_{\q}:=\cL\cap E_{\q}$.  Equip $M_{\q}$
with the (positive definite) metric induced from
$\lor[n+1,1]$.  Then $M_{\q}$ is an $n$-dimensional space
form\footnote{Strictly, when $\ip{\q,\q}>0$, $M_q$ has two
  components (it is a hyperboloid of two sheets) each of
  which is a space form.}
of curvature $-\ip{\q,\q}$ (see, for example,
\cite[\S1.1.4]{Her03}).

For $p\in M_{\q}$, the normal bundle to $M_{\q}$ in
$\lor[n+1,1]$ is spanned by $p,\q$ so that
$T_pM_{\q}=\Span{p,\q}^{\perp}\leq\lor[n+1,1]$.  We realise
$\g$ as the conformal vector fields $\conf_{M_{\q}}$ of $M_{\q}$ by
\begin{equation*}
  \Y(p)=-Yp-\ip{Yp,\q}p
\end{equation*}
where the minus sign ensures that $Y\mapsto (p\mapsto
\Y(p))\colon\g\cong\conf_{M_{\q}}$ is an isomorphism of Lie
algebras.  Note that this isomorphism identifies $\g_{M_{\q}}$
with $\stab(\q)=\Wedge^2\q^{\perp}\leq\g$.

We equip $\g$ with the invariant symmetric bilinear
form\footnote{This is a non-negative multiple of the Killing
  form of $\g$.}
$\B{\,,\,}$ given by
\begin{equation*}
  \B{Y_1,Y_2}=\half\trace Y_1Y_2
\end{equation*}
and remark that a short calculation gives
\begin{equation}
  \label{eq:10}
  \ip{\Y(p),v}=\B{Y,p\wedge v},
\end{equation}
for all $v\in T_pM_{\q}$.

With this in hand, let $(x,q)\colon\Sigma\to M_{\q}$ be an
isothermic surface and use $\I$ to define
$Q^{\#}\in\sect\Sym_{0}T\Sigma$ by
\begin{equation*}
  Q(U,V)=I(Q^{\#}U,V),
\end{equation*}
where, as usual, $Q=2\Re q$.  Then, for $\Y\in\conf_{M_{\q}}$,
\begin{equation*}
  \eta_Y(U)=Q(\Y^{\top},U)=\I(\Y^{\top},Q^{\#}U)=
  \ip{\Y\circ x,\d x\circ Q^{\#}U}.
\end{equation*}
From \eqref{eq:10}, this gives:
\begin{equation*}
  \eta^q_x(\Y)=\B{Y,x\wedge\d x\circ Q^{\#}}
\end{equation*}
so that, identifying $\g$ with $\g^{*}$ via $\B{\,,\,}$, we
conclude:
\begin{equation}
  \label{eq:12}
  \eta_x^q=x\wedge \d x\circ Q^{\#}\in\Omega^1_{\Sigma}\tens\g,
\end{equation}
(c.f.\ \cite[Equation~(3)]{BurSan12}).

Finally, for future use, we define a family of
$\g_{M_{\q}}$-valued $1$-forms on $M_{\q}$ as follows: fix
$\fo\in E_{\q}$, let $i\colon M_{\q}\to\lor[n+1,1]$ be the
inclusion and set $\alpha^{\fo}=(i-\fo)\wedge\d i$.  Note
that $i-\fo$ and so $\d i$ take values in $\q^{\perp}$ so
that $\alpha^{\fo}$ takes values in
$\Wedge^2\q^{\perp}=\g_{M_{\q}}$ as required.  Explicitly,
\begin{equation}
  \label{eq:13}
  \alpha^{\fo}_p(v):=(p-\fo)\wedge v,
\end{equation}
for $v\in T_pM_{\q}$.  We have
\begin{lem}
  \label{th:5}
  With $\alpha^{\fo}$ defined as above:
  \begin{compactenum}
  \item $\d\alpha^{\fo}(V,W)=2V\wedge W$, for $V,W\in
    \sect T\Sigma$.
  \item Modulo exact forms, $\alpha^{\fo}$ is independent of
    $\fo\in E_{\q}$.
  \end{compactenum}
\end{lem}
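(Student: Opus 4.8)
The plan is to treat $\alpha^{\fo}=(i-\fo)\wedge\d i$ as the wedge, into $\Wedge^2\lor[n+1,1]=\g$, of the $\lor[n+1,1]$-valued $0$-form $i-\fo$ with the $\lor[n+1,1]$-valued $1$-form $\d i$, and to differentiate using the Leibniz rule for products built from a bilinear pairing. Both parts then reduce to short computations in this vector-valued exterior calculus. Throughout, since $\d\alpha^{\fo}$ is a $2$-form on $M_{\q}$, the assertion in (1) should be read after pullback by the immersion $x$, so that the symbol $V\wedge W$ on the right-hand side abbreviates $\d x(V)\wedge\d x(W)\in\Wedge^2\lor[n+1,1]$; the identity I establish in fact holds for arbitrary tangent vectors to $M_{\q}$.

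For (1), because $i-\fo$ is a $0$-form the Leibniz rule gives $\d\alpha^{\fo}=\d(i-\fo)\wedge\d i+(i-\fo)\wedge\d(\d i)$. Here $\fo$ is constant, so $\d(i-\fo)=\d i$, while $\d(\d i)=0$; hence $\d\alpha^{\fo}=\d i\wedge\d i$, the $\Wedge^2$-valued $2$-form whose form part is the exterior square of $\d i$ and whose coefficients come from the vector wedge. Evaluating on $V,W$ and expanding in a basis, $\d i=\d i^a\,e_a$ with $\d i^a(V)=V^a$, the antisymmetrisation $(\d i^a\wedge\d i^b)(V,W)=V^aW^b-W^aV^b$ yields $\d\alpha^{\fo}(V,W)=(V^aW^b-W^aV^b)(e_a\wedge e_b)=V\wedge W-W\wedge W$; since $W\wedge V=-V\wedge W$, this is $2\,V\wedge W$, as claimed. (Note that, unlike the scalar case, $\d i\wedge\d i$ need not vanish, precisely because the coefficient wedge is antisymmetric as well.)

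For (2), I compare two base points $\fo,\fo'\in E_{\q}$. The factor $\d i$ is common, so the difference telescopes: $\alpha^{\fo}-\alpha^{\fo'}=(\fo'-\fo)\wedge\d i$, with $\fo'-\fo$ a constant vector. The decisive observation is that $\fo,\fo'\in E_{\q}$ means $\ip{\fo,\q}=\ip{\fo',\q}=-1$, whence $\fo'-\fo\in\q^{\perp}$. Consequently $(\fo'-\fo)\wedge(i-\fo)$ is a $\Wedge^2\q^{\perp}=\g_{M_{\q}}$-valued $0$-form, and its exterior derivative is exactly $(\fo'-\fo)\wedge\d i$ (again because $\fo'-\fo$ and $\fo$ are constant). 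Therefore $\alpha^{\fo}-\alpha^{\fo'}=\d\!\left[(\fo'-\fo)\wedge(i-\fo)\right]$ is exact.

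The only place demanding care is (2): one wants the primitive to stay $\g_{M_{\q}}$-valued rather than merely $\g$-valued. The naive primitive $(\fo'-\fo)\wedge i$ escapes $\Wedge^2\q^{\perp}$ because $\ip{i,\q}=-1\neq0$ forces $i\notin\q^{\perp}$; the remedy is to replace $i$ by $i-\fo$, which alters the primitive by the constant $(\fo'-\fo)\wedge\fo$ and so does not change its derivative, while the fact that $\fo'-\fo\in\q^{\perp}$ guarantees that $(\fo'-\fo)\wedge(i-\fo)$ lands in $\g_{M_{\q}}$. This is the main (and essentially the only) subtlety; everything else is a direct application of the Leibniz rule.
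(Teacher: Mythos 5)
Your proof is correct and follows essentially the same route as the paper: Leibniz rule plus the antisymmetry of the coefficient wedge for (1), and the explicit primitive $(\fo'-\fo)\wedge(i-\fo)$ for (2), which is exactly the paper's $\d((i-\fo)\wedge v)$ with $v=\fo'-\fo\in\q^{\perp}$ up to sign conventions. (There is a harmless typo, $V\wedge W-W\wedge W$ for $V\wedge W-W\wedge V$, and your care that the primitive stays $\Wedge^2\q^{\perp}$-valued is a worthwhile point the paper leaves implicit.)
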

\begin{proof}
  By the Leibniz rule,
  \begin{equation*}
    \d\alpha^{\fo}=\d i\curlywedge\d i
  \end{equation*}
  where $\curlywedge$ is wedge product of
  $\lor[n+1,1]$-valued $1$-forms using the exterior product
  of $\lor[n+1,1]$ to multiply coefficients to yield a
  $\Wedge^2\lor[n+1,1]$-valued $2$-form.  Since we are
  already identifying $TM_{\q}$ with $\d i(TM_{\q})$, this reads
  \begin{equation*}
    \d\alpha^{\fo}(V,W)=\d i(V)\wedge\d i(W)-\d i(W)\wedge\d i(V)=
    2V\wedge W,
  \end{equation*}
  which settles Item (1).

  For Item (2), note that for $v\in\q^{\perp}$,
  \begin{equation*}
    \alpha^{\fo+v}=\alpha^{\fo}+\d((i-\fo)\wedge v).
  \end{equation*}
\end{proof}

\subsection{$3$-dimensional conic sections}
\label{sec:3-dimensional-conic}

Let us focus our attention on the case $n=3$ of principal
interest to us.  Here there is a new ingredient:
$\dim \q^{\perp}=4$ so that wedge product
$\Wedge^2\q^{\perp}\times\Wedge^2\q^{\perp}\to
\Wedge^4\q^{\perp}\cong\R$ is a symmetric bilinear form of
signature $(3,3)$ and so induces an isomorphism
$S\colon\g_{M_{\q}}\to\g_{M_{\q}}^{*}$.

In more detail, fix a volume form $\det\in(\Wedge^5\lor)^{*}$ and
make the following:
\begin{defn}
  Let $S\colon\Wedge^2\q^{\perp}\to(\Wedge^2\q^{\perp})^{*}$ be
  given by
  \begin{equation}
    \label{eq:14}
    S(Y_1)(Y_2)=\det(\fo\wedge Y_1\wedge Y_2),
  \end{equation}
  for $Y_1,Y_2\in\Wedge^2\q^{\perp}$.  Here $\fo$ is any
  element of $E_{\q}$ and $S$ is independent of that choice.
\end{defn}
\goodbreak
\begin{rems}
\item[]
  \begin{compactenum}
  \item Note that $S$ is equivariant for the action of
    $\rSL(\q^{\perp})$ and so, in particular, viewed as an
    isomorphism $\g_{M_{\q}}\cong\g_{M_{\q}}^{*}$,
    \emph{$S$ intertwines the adjoint and co-adjoint actions of
      $G_{M_{\q}}$}.
  \item When $K=-\ip{\q,\q}\neq 0$, $\q^{\perp}$ is
    isomorphic to $\R^4$ or $\R^{3,1}$ according to the sign
    of $K$.  Now $\g_{M_{\q}}$ is semisimple and we can
    equivariantly identify $\g_{M_{\q}}$ with its dual via
    (minus) the Killing form.  In this setting, we see from
    \eqref{eq:14} that
    $S\colon\g_{M_{\q}}\to\g_{M_{\q}}$ is essentially the Hodge
    star operator.
  \item When $K=0$, $\g_{M_{\q}}$ is the Euclidean Lie
    algebra and far from semisimple: it is the semi-direct
    product $\fe(3)=\fso(3)\oplus\R^{3}$.  In general, the
    adjoint and co-adjoint representations of $\fe(n)$ are
    very different and the
    existence of $S$ when $n=3$ came as a surprise to us.
  \end{compactenum}
\end{rems}

For $p\in M_{\q}$, orient $N_pM_{\q}$ so that $p\wedge \q>0$
whence $\vol_{M_{\q}}\restr{p}=i_{p\wedge\q}\det$.  We now
have:
\begin{lem}
  \label{th:7}
  Let $v,w\in T_pM_{\q}$ and $Y\in\g_{M_{\q}}$.  Then
  \begin{equation}
    \label{eq:11}
    S(v\wedge w)(Y)=i_{\Y(p)}\vol_{M_{\q}}(v,w).
  \end{equation}
\end{lem}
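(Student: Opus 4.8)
The plan is to evaluate both sides explicitly in terms of the fixed volume form $\det$ and match them through a short chain of equalities, the middle link of which is a purely algebraic identity in $\Wedge^2\q^\perp$.

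First I would unwind the right-hand side. Since $\vol_{M_{\q}}\restr{p}=i_{p\wedge\q}\det$, one more contraction gives
\[
  i_{\Y(p)}\vol_{M_{\q}}(v,w)=\det(p,\q,\Y(p),v,w),
\]
with the convention $i_{p\wedge\q}=i_{\q}i_{p}$ matching the orientation $p\wedge\q>0$. Because $Y\in\g_{M_{\q}}=\stab(\q)=\Wedge^2\q^\perp$ annihilates $\q$, we have $Yp\in\q^\perp$, hence $\ip{Yp,\q}=0$, and the defining formula $\Y(p)=-Yp-\ip{Yp,\q}p$ collapses to $\Y(p)=-Yp$. Thus the right-hand side equals $\det(p,\q,-Yp,v,w)$. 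Next I would simplify the left-hand side: as $p\in E_{\q}$, the asserted $\fo$-independence of $S$ lets me take $\fo=p$; equivalently, replacing $\fo$ by $p$ alters the argument by $\det((p-\fo)\wedge v\wedge w\wedge Y)$, which vanishes since $p-\fo$, $v$, $w$ and the two factors of $Y$ all lie in the $4$-dimensional space $\q^\perp$, so their wedge sits in $\Wedge^5\q^\perp=0$. Therefore $S(v\wedge w)(Y)=\det(p\wedge v\wedge w\wedge Y)$.

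The crux is then the algebraic identity
\[
  \det(p\wedge v\wedge w\wedge Y)=-\det(p,\q,Yp,v,w),\qquad Y\in\Wedge^2\q^\perp,
\]
which relates the bivector $Y$ appearing \emph{wedged} on the left to its \emph{contraction} $Yp$ on the right. Both sides are linear in $Y$, so I would verify it on a basis of $\Wedge^2\q^\perp$ adapted to the splitting $\q^\perp=\Span{r}\oplus T_pM_{\q}$, where $r:=\q-Kp\in\q^\perp$ (with $K=-\ip{\q,\q}$) and $T_pM_{\q}=\Span{v_1,v_2,v_3}$ for an orthonormal basis $v_1,v_2,v_3$. For a purely tangential basis element $Y=v_i\wedge v_j$ both sides vanish: the left because $v\wedge w\wedge v_i\wedge v_j\in\Wedge^4 T_pM_{\q}=0$, the right because $Yp=0$. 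For $Y=r\wedge v_k$ one computes $Yp=\ip{r,p}v_k=-v_k$, and both sides reduce to $\det(p,\q,v_k,v,w)$ up to an even permutation of the last three slots. Combining the three displayed equalities then gives $S(v\wedge w)(Y)=i_{\Y(p)}\vol_{M_{\q}}(v,w)$.

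The main obstacle I anticipate is not conceptual but the bookkeeping of signs: pinning down the interior-product convention so that $i_{p\wedge\q}$ is consistent with the orientation $p\wedge\q>0$, keeping the identification of $\g_{M_{\q}}$ with $\Wedge^2\q^\perp$ and the action $Yp$ coherent, and tracking the permutation signs in the final determinant comparison. Once these are fixed, the two cases above establish the crux identity on all of $\Wedge^2\q^\perp$ by linearity, and the result follows.
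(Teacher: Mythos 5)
Your proof is correct and follows essentially the same route as the paper's: both reduce \eqref{eq:11} to the algebraic identity $\det(\fo\wedge v\wedge w\wedge Y)=-\det(p\wedge\q\wedge Yp\wedge v\wedge w)$ by using $\Wedge^5\q^{\perp}=0$ to replace $\fo$ with $p$ and observing $\Y(p)=-Yp$, and then verify that identity by exploiting linearity in $Y$ together with the splitting of $\q^{\perp}$ into $T_pM_{\q}$ and the line spanned by $\q+\ip{\q,\q}p$. Checking the identity on an adapted basis of bivectors, as you do, rather than on general decomposables $a\wedge b$ expanded into tangential and normal components, as the paper does, is only a cosmetic difference.
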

\begin{proof}
  Unravelling the definitions, we see that \eqref{eq:11}
  amounts to
  \begin{equation}
    \label{eq:15}
    \fo\wedge v\wedge w\wedge Y = -p\wedge\q\wedge Yp\wedge
    v\wedge w.
  \end{equation}
  It suffices to prove this for decomposable
  $Y\in\Wedge^{2}\q^{\perp}$ so let $Y=a\wedge b$, for
  $a,b\in\q^{\perp}$.  Then
  \begin{equation*}
    a=a^{\top}-\ip{a,p}(\q+\ip{\q,\q}p),
  \end{equation*}
  with $a^{\top}\in T_{p}M_{\q}$, and similarly for $b$ so that
  \begin{align*}
    \fo\wedge v\wedge w\wedge Y&=p\wedge v\wedge w\wedge
    Y=p\wedge v\wedge w\wedge a\wedge b\\
    &=-p\wedge v\wedge w\wedge a\wedge \ip{b,p}\q-p\wedge
    v\wedge w\wedge\ip{a,p}\q\wedge b\\
    &=-p\wedge\q\wedge v\wedge
    w\wedge\bigl(\ip{a,p}b-\ip{b,p}a\bigr)\\
    &=-p\wedge\q\wedge Yp\wedge v\wedge
    w,
  \end{align*}
  as required.  Here we used $\Wedge^5\q^{\perp}=0$ in the
  first line to replace $\fo$ with $p$ and $\Wedge^4T_pM_{\q}=0$ in the
  second line to replace $a$ or $b$ with its component along $\q$.
\end{proof}

As a corollary, we find an explicit representative for
$\alpha_{Y}$:
\begin{cor}
  \label{th:8}
  For $Y\in\g_{M_{\q}}$, we have
  \begin{equation}
    \label{eq:16}
    S(\alpha^{\fo})(Y)\equiv 2\alpha_Y\mod \d\Omega^0_{M_{\q}}.
  \end{equation}
\end{cor}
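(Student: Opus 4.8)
The plan is to verify \eqref{eq:16} by showing that the ordinary $1$-forms $S(\alpha^{\fo})(Y)$ and $2\alpha_Y$ on $M_{\q}$ have equal exterior derivatives, and then to invoke the vanishing of $H^1(M_{\q},\R)$ to conclude that they differ by the $\d$ of a function. That cohomology vanishes for free: it is exactly the hypothesis (with $k=2$) that allowed $\alpha_Y$ to be defined in \cref{th:2}, and it holds because the $3$-dimensional space form $M_{\q}$ is simply connected. Throughout, $Y\in\g_{M_{\q}}$ is identified with the associated Killing field $\Y$, so that $\alpha_Y$ is characterised by $\d\alpha_Y=i_{\Y}\vol_{M_{\q}}$.

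First I would note that $S$ is a \emph{fixed} linear isomorphism $\g_{M_{\q}}\to\g_{M_{\q}}^{*}$, since the formula \eqref{eq:14} defining it uses only the constant data $\det$ and a single basepoint $\fo$; in particular $S$ does not vary with the point of $M_{\q}$. Applying a constant linear map to a vector-valued form commutes with $\d$, so
\begin{equation*}
  \d\bigl(S(\alpha^{\fo})(Y)\bigr)=\bigl(S(\d\alpha^{\fo})\bigr)(Y).
\end{equation*}
Now I would feed in the two lemmas. By \cref{th:5}(1), $\d\alpha^{\fo}(v,w)=2\,v\wedge w$ for $v,w\in T_pM_{\q}$, where $v\wedge w\in\Wedge^2\q^{\perp}=\g_{M_{\q}}$; applying $S$ and evaluating on $Y$, \cref{th:7} gives
\begin{equation*}
  \bigl(S(\d\alpha^{\fo})\bigr)(Y)(v,w)=2\,S(v\wedge w)(Y)=2\,i_{\Y(p)}\vol_{M_{\q}}(v,w).
\end{equation*}
Hence $\d\bigl(S(\alpha^{\fo})(Y)\bigr)=2\,i_{\Y}\vol_{M_{\q}}=\d(2\alpha_Y)$, so $S(\alpha^{\fo})(Y)-2\alpha_Y$ is closed, and therefore exact.

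This is really a matter of assembling \cref{th:5} and \cref{th:7}, so I do not expect a serious obstacle. The one point that demands care is the constancy of $S$ and the consequent interchange of $S$ with $\d$: one must check that the $\fo$ in \eqref{eq:14} is a genuinely fixed basepoint, independent of the evaluation point on $M_{\q}$, before differentiating. After that, the factor of $2$ in the conclusion is forced by the factor of $2$ in \cref{th:5}(1), and keeping track of it is the only remaining place an error could enter.
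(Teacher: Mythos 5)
Your proposal is correct and follows essentially the same route as the paper: both arguments compute $\d\bigl(S(\alpha^{\fo})(Y)\bigr)=2\,i_{\Y}\vol_{M_{\q}}$ by combining \cref{th:5} with \cref{th:7}, and then conclude by the uniqueness of $\alpha_Y$ modulo exact forms (i.e.\ the vanishing of $H^1(M_{\q},\R)$, which the paper leaves implicit in the definition of $\alpha_Y$). Your extra remarks on the constancy of $S$ and the independence from $\fo$ are sound but are already built into the definition of $S$ via $\Wedge^5\q^{\perp}=0$.
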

\begin{proof}
  We must show that
  \begin{equation*}
    \d \bigl(S(\alpha^{\fo})(Y)\bigr)=2i_{\Y}\vol_{M_{\q}}.
  \end{equation*}
  However, for $V,W\in\sect TM_{\q}$,
  \begin{equation*}
    \d
    \bigl(S(\alpha^{\fo})(Y)\bigr)(V,W)=S(\d\alpha^{\fo}(V,W))(Y)=2S(V\wedge
    W)(Y)=2i_{\Y}\vol_{M_{\q}}(V,W),
  \end{equation*}
  where we have used \cref{th:5} for the second equality and
  \cref{th:7} for the last.
\end{proof}

\subsection{CMC surfaces in space forms}
\label{sec:cmc-surfaces-space}

Let $x\colon\Sigma\to M_{\q}$ be a CMC surface in a
$3$-dimensional space form.  We have seen that $x$ is
isothermic with $q=\II^{2,0}$ so that, from \eqref{eq:12},
\begin{equation*}
  \eta^q_{x}=x\wedge \d x\circ A^{N}_{0}.
\end{equation*}
Moreover, $\d x\circ
A^{N}_{0}=-\d(N+Hx)\in\Omega^1_{\Sigma}\tens \q^{\perp}$
from which we learn two things.  First,
\begin{equation*}
  \eta^q_x\q=d(N+Hx)
\end{equation*}
is exact so that $[\eta_x^q]\in
H^1(\Sigma,\R)\tens\g_{M_{\q}}$.  Second, we have
\begin{equation*}
  \eta_x^q\equiv \d x\wedge (N+Hx)\mod \d\Omega^0_{\Sigma}\tens\g_{M_{\q}}.
\end{equation*}
We are now in a position to prove our main result:
$S[\eta_x^q]=\mu_x$.  First note that, by \cref{th:7}, for
$Y\in\g_{M_{\q}}$,
\begin{equation*}
  S(\d x\wedge N)(Y)=\vol_{M_{\q}}(\Y\circ x,\d
  x,N)=\vol_{M_{\q}}(\Y^{\top},\d x,N)=i_{Y^{\top}}\vol_x.
\end{equation*}
On the other hand, notice that
$x^{*}\alpha^{\fo}=(x-\fo)\wedge \d x$ so that, working modulo exact forms,
\begin{equation*}
  S(\d x\wedge x)(Y)\equiv S(\d x\wedge (x-\fo))(Y)=
  -S(x^{*}\alpha^{\fo})(Y)\equiv -2x^{*}\alpha_Y,
\end{equation*}
by \cref{th:8}.  Putting this all together, we get
\begin{equation*}
  S[\eta_x^q](Y)=[i_{\Y^{\top}}\vol_x-2Hx^{*}\alpha_{Y}]=\mu_x(Y).
\end{equation*}
To summarise:
\begin{thm}
  \label{th:9}
  Let $x\colon\Sigma\to M_{\q}$ be a CMC surface in a
  $3$-dimensional space form with retraction form $\eta_x^q$.
  Then
  \begin{compactenum}
  \item $[\eta_x^q]\in H^1(\Sigma,\R)\tens\g_{M_{\q}}$.
  \item $S[\eta_x^q]=\mu_{x}$.
  \end{compactenum}
\end{thm}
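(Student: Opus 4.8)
The argument assembles the structural results already in place, so the plan is to carry out the pieces in order. Since $x$ is CMC it is globally isothermic with $q=\II^{2,0}$, and the induced $Q^{\#}\in\sect{\Sym_{0}T\Sigma}$ is the trace-free shape operator $A^{N}_{0}=A^{N}-H\,\id$; substituting into \eqref{eq:12} gives $\eta_x^q=x\wedge\d x\circ A^{N}_{0}$. The one piece of genuine geometry I need is the Weingarten-type identity $\d x\circ A^{N}_{0}=-\d(N+Hx)$, which I would verify by differentiating $N+Hx$ in the ambient $\lor$ and separating tangential from normal parts; constancy of $H$ is what makes the right-hand side an exact differential with values in $\q^{\perp}$, and the same computation shows $N+Hx\in\q^{\perp}$.

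For Item (1), recall that $\g_{M_{\q}}=\stab(\q)=\Wedge^2\q^{\perp}$ is exactly the kernel of the evaluation map $\g\to\q^{\perp}$, $Y\mapsto Y\q$, giving a short exact sequence $0\to\g_{M_{\q}}\to\g\to\q^{\perp}\to0$. Tensoring with $H^1(\Sigma,\R)$ stays exact, so $H^1(\Sigma,\R)\tens\g_{M_{\q}}$ is the kernel of $H^1(\Sigma,\R)\tens\g\to H^1(\Sigma,\R)\tens\q^{\perp}$, and it suffices to show the $\q$-evaluation $\eta_x^q\q$ is exact. Writing $\eta_x^q=x\wedge(-\d(N+Hx))$ and using $(a\wedge b)\q=\ip{a,\q}b-\ip{b,\q}a$ together with $\ip{x,\q}=-1$ and $\d(N+Hx)\in\q^{\perp}$ collapses this to $\eta_x^q\q=\d(N+Hx)$, which is manifestly exact.

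For Item (2), I would first reduce modulo exact forms: by Leibniz $\d\bigl(x\wedge(N+Hx)\bigr)=\d x\wedge(N+Hx)+x\wedge\d(N+Hx)$, so $\eta_x^q\equiv\d x\wedge N+H\,\d x\wedge x$ modulo $\d\Omega^0_{\Sigma}\tens\g_{M_{\q}}$. I then apply $S$ termwise for fixed $Y\in\g_{M_{\q}}$. For the first term, \cref{th:7} gives $S(\d x\wedge N)(Y)=i_{\Y\circ x}\vol_{M_{\q}}(\d x,N)$; the normal part of $\Y$ is a multiple of $N$ and so drops out, leaving $\vol_{M_{\q}}(\Y^{\top},\d x,N)=i_{Y^{\top}}\vol_x$ after using $x^{*}(i_N\vol_{M_{\q}})=\vol_x$. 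For the second term, $x^{*}\alpha^{\fo}=(x-\fo)\wedge\d x$ by \eqref{eq:13} and $\d x\wedge\fo=\d(x\wedge\fo)$ is exact, so $\d x\wedge x\equiv-x^{*}\alpha^{\fo}$; since $S$ is a fixed fibrewise map it commutes with $x^{*}$, and \cref{th:8} yields $S(\d x\wedge x)(Y)\equiv-2x^{*}\alpha_Y$. Adding, $S[\eta_x^q](Y)=[i_{Y^{\top}}\vol_x-2Hx^{*}\alpha_Y]=\mu_x(Y)$, which is precisely the moment class in the $k=2$ case.

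The linear-algebraic core—the two evaluations of $S$—is forced once \cref{th:5,th:7,th:8} are available, and the cohomological bookkeeping is routine. The step I expect to be the main obstacle, and the only place the geometry truly enters, is the identity $\d x\circ A^{N}_{0}=-\d(N+Hx)$ together with the claim that both $N+Hx$ and $\d(N+Hx)$ are $\q^{\perp}$-valued: this is where constancy of $H$ is used, and it simultaneously drives the exactness of $\eta_x^q\q$ in Item (1) and the modulo-exact reduction in Item (2). A secondary subtlety worth flagging is that $N$, although normal to $\Sigma$, is tangent to $M_{\q}$, so that it legitimately occupies a slot of $\vol_{M_{\q}}$ and \cref{th:7} applies with $N$ in one of its arguments.
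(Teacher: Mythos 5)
Your proposal is correct and follows essentially the same route as the paper: the identity $\d x\circ A_{0}^{N}=-\d(N+Hx)$ with values in $\q^{\perp}$, exactness of $\eta_x^q\q=\d(N+Hx)$ for Item (1), the reduction $\eta_x^q\equiv\d x\wedge(N+Hx)$ modulo exact forms, and the termwise evaluation of $S$ via \cref{th:7,th:8} for Item (2). One incidental assertion should be deleted: $N+Hx\notin\q^{\perp}$ when $H\neq 0$, since $\ip{N+Hx,\q}=H\ip{x,\q}=-H$; fortunately none of your computations actually use it --- only $\d(N+Hx)\in\q^{\perp}$ (i.e.\ tangency of $\d(N+Hx)$ to $M_{\q}$) is needed, both for the $\q$-evaluation in Item (1) and for the Leibniz reduction in Item (2).
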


\subsection{Application: parallel CMC surfaces}
\label{sec:appl-parall-cmc}

Let $x\colon\Sigma\to M$ have constant mean curvature $H$ in a
$3$-dimensional space form $M$ of sectional curvature $K$.
Then, when $H^2+K>0$, and away from umbilics of $x$, one can
find a parallel CMC surface $\dual{x}\colon\Sigma\to M$ for which
the mean curvature, induced conformal structure and Hopf
differential coincide with those of $x$.  When $M=\R^3$,
this is a classical result of Bonnet (c.f\
\cite[p.~266]{bo60}) and $\dual{x}=x+N/H$.  For $K\neq 0$,
the parallel surfaces come in congruent pairs and are
described in Palais--Terng \cite{PalTer88} or
\citelist{\cite{HerTjaZue97}*{Part II, \S5}\cite{Her03}*{\S2.7}}.

We will give a self-contained account of this result and use it to
reveal a symmetry of the moment class:
$\mu_x=\mu_{\dual{x}}$.

So fix $\q$ with $\ip{\q,\q}=-K$ and let
$x\colon\Sigma\to M_{\q}$ be an immersion with constant mean
curvature $H$.  Let $m\in\R$ solve
\begin{equation}
  \label{eq:17}
  m^{2}-2Hm-K=0,\,\text{equivalently,}\,(m-H)^2=H^2+K.
\end{equation}
That $m$ be real is equivalent to $H^2+K\geq 0$ and we
assume henceforth that $m\neq 0,H$, so that, in particular,
$H^2+K>0$.

We now define $\dual{x}\colon\Sigma\to\lor$ by
\begin{equation}
  \label{eq:18}
  \dual{x}=\tfrac{1}{m-H}(N+Hx+\q/m)
\end{equation}
and note that, as a consequence of \eqref{eq:17}, we have
\begin{equation*}
  \ip{\xh,\xh}=0,\qquad \ip{\xh,\q}=-1
\end{equation*}
so that $\xh\colon\Sigma\to M_{\q}$.  We have:
\begin{prop}
  \label{th:10}
  $\xh\colon\Sigma\to M_{\q}$ is parallel to $x$ and immerses away from the
  umbilics of $x$.  Where $\xh$ immerses:
  \begin{compactenum}
  \item \label{item:3}the metric $\dual\I$ induced by $\xh$ is conformal
    to $\I$;
  \item \label{item:4}$\xh$ has (oriented) normal $\dual{N}$ given by
    \begin{equation}
      \label{eq:19}
      \dual{N}=\tfrac{1}{m-H}(Kx-HN-\q);
    \end{equation}
  \item \label{item:5}$\xh$ has constant mean curvature $H$;
  \item \label{item:6}the Hopf differentials of $x$ and $\dual{x}$
    coincide: $q=\dual{q}$.
  \end{compactenum}
\end{prop}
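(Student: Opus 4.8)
The plan is to compute everything in the flat ambient space $\lor$ by differentiating the explicit formula \eqref{eq:18} against the structure equations of $x$. The single structural input required is that the conic section $M_{\q}$ is totally umbilic in $\lor$: differentiating its defining relations $\ip{i,i}=0$ and $\ip{i,\q}=-1$ shows that the second fundamental form of $M_{\q}$ in $\lor$ is $\ip{V,W}(\q-Kx)$, where $K=-\ip{\q,\q}$. Feeding this into the Gauss and Weingarten equations of the hypersurface $x\subset M_{\q}$ yields the ambient structure equations
\begin{gather*}
  \Dif_U x=\d x(U),\qquad \Dif_U N=-\d x(A^N U),\qquad \Dif_U\q=0,\\
  \Dif_U\d x(V)=\d x(\nabla^{\Sigma}_U V)+\I(A^N U,V)\,N+\I(U,V)(\q-Kx),
\end{gather*}
on which everything below rests.

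The central computation differentiates \eqref{eq:18}. Since $\q$ is constant and $\trace A^N=2H$ (because $HN=\mcv=\half\trace\II$), the result collapses to
\begin{equation*}
  \d\xh=\tfrac{1}{m-H}\,\d x\circ(H\id-A^N)=-\tfrac{1}{m-H}\,\d x\circ A^N_0 .
\end{equation*}
As $A^N_0$ is invertible exactly away from umbilics, this proves $\xh$ immerses there and that $\d\xh(T\Sigma)=\d x(T\Sigma)$, so $x$ and $\xh$ share tangent planes; I would upgrade this to the full parallel statement by solving the geodesic equation $\gamma''=\q-K\gamma$ furnished by the umbilic second fundamental form and checking that $\xh=\gamma(t_0)$ and $\dual N=-\gamma'(t_0)$ for a constant $t_0$ fixed by \eqref{eq:17}, so that $x$ and $\xh$ literally share their normal geodesics. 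For item~(1), the same formula gives $\dual\I(U,V)=\tfrac{1}{(m-H)^2}\I\bigl((A^N_0)^2 U,V\bigr)$, and Cayley--Hamilton for the trace-free operator $A^N_0$, namely $(A^N_0)^2=-\det(A^N_0)\,\id$, turns this into $\dual\I=\tfrac{-\det A^N_0}{(m-H)^2}\,\I$, manifestly conformal to $\I$.

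For item~(2) I would simply evaluate the four inner products $\ip{\dual N,\dual N}$, $\ip{\dual N,\q}$, $\ip{\dual N,\xh}$ and $\ip{\dual N,\d\xh}$ and check that \eqref{eq:19} defines a unit field orthogonal to $\q$, to $\xh$ and to $\d\xh(T\Sigma)$; the relation $(m-H)^2=H^2+K$ of \eqref{eq:17} is precisely what forces $\ip{\dual N,\dual N}=1$. Differentiating \eqref{eq:19} then gives
\begin{equation*}
  \d\dual N=\tfrac{1}{m-H}\,\d x\circ(K\id+HA^N),
\end{equation*}
so that the Weingarten operator of $\xh$ is $A^{\dual N}=(A^N_0)^{-1}(K\id+HA^N)$; evaluating its trace in an eigenbasis of $A^N$ gives $\trace A^{\dual N}=2H$, which is item~(3), $\dual H=H$.

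Finally, item~(4): because $x$ and $\xh$ induce conformally equivalent metrics they define the same complex structure, so $q$ and $\dual q$ are quadratic differentials on one and the same Riemann surface and it is enough to show $Q=\dual Q$. With $Q^{\#}=A^N_0$ and, by the identical construction applied to $\xh$, $\dual{Q}^{\#}=A^{\dual N}_0=A^{\dual N}-H\id$, a short diagonalisation gives $A^{\dual N}_0=\tfrac{H^2+K}{-\det A^N_0}\,A^N_0$, whence
\begin{equation*}
  \dual Q(U,V)=\dual\I(A^{\dual N}_0 U,V)=\tfrac{-\det A^N_0}{(m-H)^2}\,\I(A^{\dual N}_0 U,V)=\I(A^N_0 U,V)=Q(U,V),
\end{equation*}
again invoking \eqref{eq:17}. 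The calculations are otherwise routine; the one genuine subtlety, and the likeliest hiding place for a sign error, is orientation: $A^N_0$ has negative determinant and hence reverses orientation, so I would take care to confirm that \eqref{eq:19} is the \emph{oriented} normal of $\xh$ in the sense of \cref{sec:cmc-cons-laws}, which is what pins the sign of $\dual H$ to $+H$ rather than $-H$.
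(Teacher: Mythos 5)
Your proposal is correct and follows essentially the same route as the paper: differentiate \eqref{eq:18} to obtain $\d\xh=-\tfrac{1}{m-H}\,\d x\circ A^N_0$, use $(A^N_0)^2=-\det A^N_0\,\id$ for conformality of $\dual\I$, verify \eqref{eq:19} by inner products, and differentiate it to read off $\dual{A}^{\dual N}=H+(H^2+K)(A^N_0)^{-1}$ and hence Items (3) and (4). The only step you flag without executing --- that $\dual N$ is the correctly \emph{oriented} normal --- is settled in the paper by checking $\xh^{*}(i_{\dual N}\vol_{M_{\q}})=-\tfrac{\det A^N_0}{(m-H)^{2}}\vol_x>0$; your additional scaffolding (the umbilic structure equations of $M_{\q}$ in $\lor$ and the normal-geodesic description of parallelity) just makes explicit what the paper leaves implicit.
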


Before proving this, we pause to extract an interesting
result.  For any CMC surface $x$, we have already seen that
\begin{equation*}
  \eta^q_{x}=\d(N+Hx)\wedge x.
\end{equation*}
In the case at hand, \eqref{eq:18} gives
\begin{equation*}
  \eta_x^q=(m-H)\d\xh\wedge x.
\end{equation*}
On the other hand, \eqref{eq:18} and \eqref{eq:19}, along
with \eqref{eq:17}, yield
\begin{equation*}
  \dual{N}+H\dual{x}=\tfrac{1}{m-H}((H^2+K)x+(1/m-1)\q)=(m-H)x+\tfrac{1/m-1}{m-H}\q.
\end{equation*}
Thus matters are completely symmetric in $x$ and $\xh$:
\begin{equation*}
  \eta_{\xh}^{\dual{q}}=(m-H)\d x\wedge\xh
\end{equation*}
so that we see
\begin{equation*}
  \eta_x^q=\eta_{\dual{x}}^{\dual{q}}+(m-H)\d(\xh\wedge x).
\end{equation*}
The two retraction forms are therefore cohomologous and so,
in view of \cref{th:9}, we conclude:
\begin{thm}
  \label{th:11}
  The moment classes of a CMC surface $x$ and its parallel
  surface $\xh$ coincide: $\mu_x=\mu_{\xh}$.
\end{thm}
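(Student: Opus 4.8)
The plan is to obtain \cref{th:11} as a formal consequence of \cref{th:9} together with the relation between the two retraction forms established immediately above. First I would invoke \cref{th:10} to record that $\xh$ is itself a CMC surface into the \emph{same} space form $M_{\q}$, with the same induced conformal structure and the same Hopf differential $\dual{q}=q$; in particular \cref{th:9} applies verbatim to $\xh$, and the isomorphism $S$ occurring there is unchanged, since it depends only on $\q$ (via the volume form on $\Wedge^2\q^{\perp}$). Thus both $\mu_x$ and $\mu_{\xh}$ are expressed as images under one and the same $S$ of the respective retraction classes.

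The substance is then already in hand. The identity
\begin{equation*}
  \eta_x^q=\eta_{\xh}^{\dual{q}}+(m-H)\,\d(\xh\wedge x)
\end{equation*}
exhibits $\eta_x^q$ and $\eta_{\xh}^{\dual{q}}$ as differing by an exact $\g_{M_{\q}}$-valued $1$-form, so they represent the same class $[\eta_x^q]=[\eta_{\xh}^{\dual{q}}]$ in $H^1(\Sigma,\R)\tens\g_{M_{\q}}$. Since $S$ is a fixed linear isomorphism it respects passage to de~Rham cohomology (it carries closed forms to closed forms and exact to exact, as $\d(S\omega)=S(\d\omega)$ for constant $S$), so I may apply it to this equality of classes. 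Using \cref{th:9} for $x$ and for $\xh$ in turn, I would read off
\begin{equation*}
  \mu_x=S[\eta_x^q]=S[\eta_{\xh}^{\dual{q}}]=\mu_{\xh},
\end{equation*}
which is the assertion.

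In truth there is no serious obstacle remaining: all the difficulty has been front-loaded into \cref{th:9}, which identifies each moment class with the image under $S$ of a retraction class, and into the symmetric computation that produced the displayed identity. The one point that warrants a word of care is that, by \cref{th:10}, $\xh$ immerses only away from the umbilics of $x$, so that $\mu_{\xh}$, and hence the equality $\mu_x=\mu_{\xh}$, is \emph{a priori} defined only on that open locus; the retraction-form identity itself, by contrast, holds on all of $\Sigma$. Since the Hopf differential is holomorphic its zeros (the umbilics) are isolated unless $x$ is totally umbilic, so this restriction is mild and does not affect the cohomological conclusion.
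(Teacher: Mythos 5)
Your argument is exactly the one the paper gives: the displayed identity shows the two retraction forms are cohomologous, and applying the fixed isomorphism $S$ together with \cref{th:9} for both $x$ and $\xh$ yields $\mu_x=\mu_{\xh}$. Your closing remark about the umbilic locus is a sensible point of care that the paper leaves implicit, but it does not change the substance.
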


\begin{rems}[for the experts]
\item{}
  \begin{compactenum}
  \item CMC surfaces in space forms and their parallel
    surfaces are a particular case of a more general theory
    of \emph{special isothermic surfaces} developed by
    Bianchi \cite{Bia05} and Darboux \cite{Dar99}.  In this
    setting, the parallel CMC surfaces arise as
    \emph{complementary isothermic surfaces}.  See
    \cite{BurSan12} for a modern perspective on this.
  \item In view of the above remark, that the two retraction
    forms are cohomologous can be also understood from a
    more general perspective: in fact, a complementary
    surface is a special kind of Darboux transform and one
    can show \cite{MR3523116} that \emph{any} Darboux pair
    $x,\dual{x}\colon\Sigma\to S^n$ have cohomologous retraction
    forms.
  \end{compactenum}
\end{rems}

Now let us take care of unfinished business:
\begin{proof}[Proof of \cref{th:10}]
  That $\xh$ is a parallel surface to $x$ in $M_{\q}$
  follows from the fact that it is a constant coefficient
  linear combination of $x$, $N$ and $\q$. 

 Next, differentiate \eqref{eq:18} to see that
 \begin{equation}\label{eq:20}
   \d\xh=-\tfrac{1}{m-H}\d x\circ A_0^N
 \end{equation}
 and recall that $A_0^{N}$ is conformal ($(A^{N}_0)^2=-\det
 A^{N}_{0}$) and so bijects
 except at its zeros which are the umbilic points of $x$.
 Thus $\xh$ immerses off the umbilic locus of $x$ and, moreover,
 $\dual{\I}=-\det A^N_0\I$ there, settling Item
 \ref{item:3}.  For the rest of the proof, we restrict
 attention to the complement of the umbilic locus of $x$ so
 that $\xh$ immerses.

 For Item \ref{item:4} , one readily checks, using \eqref{eq:17} that
 $\dual{N}$ defined by \eqref{eq:19} has unit length and is
 orthogonal to $\xh,\d\xh(T\Sigma)=\d x(T\Sigma),\q$.
 Moreover, \eqref{eq:17} and \eqref{eq:20} yield
 \begin{equation*}
   \xh^{*}(i_{\dual N}\vol_{M\q})=-\tfrac{\det A^N_0}{(m-H)^{2}}\vol_x
 \end{equation*}
 which is a positive multiple of $\vol_x$ so that $\dual{N}$
 is the correctly oriented unit normal to $\xh$.

 For the rest, we compute the shape operator $\dual{A}^{\dual N}$
 of $\xh$:
 \begin{align*}
   \d\dual{N}&=\tfrac{1}{m-H}\d(Kx-HN)=\tfrac{1}{m-H}\d
   x(K+HA^N)\\
   &=\tfrac{1}{m-H}\d x(H^2+K+HA^N_0)\\
   &=-\d\xh\bigl((A^N_0)^{-1}(H^2+K+HA^N_0)\bigr)=-\d\xh\bigl(H+(H^2+K)(A_0^N)^{-1}\bigr),
 \end{align*}
 where we used \eqref{eq:20} to reach the last line.  Since
 $(A_0^N)^{-1}$ is trace-free, we immediately conclude
 \begin{equation*}
   \dual{H}=H,\qquad \dual{A}_0^{\dual{N}}=(H^2+K)(A_0^N)^{-1},
 \end{equation*}
 settling Item \ref{item:5}.  Further, from \eqref{eq:17} and
 \eqref{eq:20}, we now get
 \begin{equation*}
   \dual\II_0=\dual{\I}(\dual{A}^{\dual{N}}_0,-)=\tfrac{H^2+K}{(m-H)^2}\I(-,A^N_0)=\II_0
 \end{equation*}
 and so we are done.
\end{proof}

\begin{rem}
  When $K\neq 0$, there are two non-zero solutions $m_{\pm}$
  to \eqref{eq:17} with $m_+-H=H-m_-$.  It is an easy
  exercise to see that the corresponding parallel surfaces
  $\xh_{\pm}$ are related by minus the reflection in the
  hyperplane orthogonal to $\q$.  Geometrically, when $K>0$,
  this means that $\xh_{\pm}$ are antipodal while, when
  $K<0$, $\xh_{\pm}$ lie in different sheets of the
  double-sheeted hyperboloid $M_{\q}$.   In the latter case,
  choose $m$ with $\abs{m}$ maximal to get $\xh$ in the same
  sheet as $x$.
\end{rem}

\subsection{Flux, torque and generalisations}
\label{sec:flux-torq-gener}

We conclude this section by using our results to write down
explicit representatives of the moment class in classical
terms.  When $K=0$, this will recover the flux form of Meeks
et al.\ \cite{MR3525098} and provide an analogous formula
for the torque form.  Moreover, we offer, apparently novel,
generalisations of these to the case $K\neq 0$.

First we take $K=0$ and take 
$\fo\in M_{\q}$.  We identify
$x_0\colon\Sigma\to\R^3=\Span{\fo,\q}^{\perp}$ with
$x\colon\Sigma\to M_{\q}$ via
\begin{equation*}
  x=\fo+x_0+\half\ip{x_0,x_0}\q
\end{equation*}
and unit normals to these by
\begin{equation*}
  N=N_0+\ip{x_0,N_0}\q.
\end{equation*}
When $x$ has constant mean curvature $H$, we have
\begin{equation}\label{eq:22}
  \eta_x^q\equiv \d x_0\wedge(N_0+Hx_0)+\bigl((N_0\wedge\d
  x_0)x_0+H\ip{x_0,x_0}\d x_{0}\bigr)\wedge\q,
\end{equation}
modulo exact terms.  Now, for $v\in\R^{3}$, let $Y\in\fe(3)$ be given by
$Y=\q\wedge v$ so that $Y$ is the constant vector field
$Y(p)=v$ on $\R^{3}$.  Then \eqref{eq:22} gives
\begin{equation*}
  S(\eta^q_x)(Y)\equiv\det(\fo\wedge\q\wedge\eta^q_x\wedge
  v)=
  \vol_{\R^3}(\d x_0,N_0+Hx_{0},v)=\ip{\d x_0\times
    (N_0+Hx_0),v}.
\end{equation*}
Thus, identifying the constant vector fields in $\fe(3)$ with
$\R^3$ via evaluation and using the metric to identify
$\R^3$ with its dual, we conclude
\begin{prop}
  \label{th:12}
  Let $x_0\colon\Sigma\to\R^3$ have constant mean curvature $H$.
  The restriction of the moment class of $x_0$ to the constant vector
  fields is represented by the \emph{flux form} $\d
  x_0\times(N_{0}+H x_0)\in\Omega^1_{\Sigma}\tens\R^3$:
  \begin{equation*}
    \mu_{x_0}\restr{\R^3}=[\d
  x_0\times(N_{0}+H x_0)].
  \end{equation*}
\end{prop}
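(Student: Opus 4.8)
The plan is to read off the statement from \cref{th:9}, which gives $\mu_x=S[\eta_x^q]$, by evaluating on the constant vector fields and recognising the flux form. Throughout I view $\eta_x^q$ as the $\g=\Wedge^2\lor$-valued $1$-form of \eqref{eq:12}, and extend the defining formula \eqref{eq:14} of $S$ to all of $\Wedge^2\lor$ by the same expression $S(Y_1)(Y_2)=\det(\fo\wedge Y_1\wedge Y_2)$.

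First I would take $v\in\R^3$ and set $Y=\q\wedge v\in\fe(3)$, the constant field $p\mapsto v$. By \cref{th:9} and \eqref{eq:14}, modulo exact forms $\mu_x(Y)=S(\eta_x^q)(Y)=\det(\fo\wedge\eta_x^q\wedge\q\wedge v)$, into which I substitute the representative \eqref{eq:22}. The decisive simplification is that $Y$ carries the factor $\q$: the second summand of \eqref{eq:22} already contains a $\q$, so it contributes a repeated factor and is killed, while any $\fo$- or $\q$-component of $\eta_x^q$ dies for the same reason. Only the $\Wedge^2\R^3$-part $\d x_0\wedge(N_0+Hx_0)$ survives, and since a $\Wedge^2\lor$-valued form commutes past $\q$ I may rewrite the expression as $\det(\fo\wedge\q\wedge\d x_0\wedge(N_0+Hx_0)\wedge v)$.

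Next I would invoke the orientation convention $\vol_{M_{\q}}\restr{p}=i_{p\wedge\q}\det$ at the origin $p=\fo$, which yields $\det(\fo\wedge\q\wedge\omega)=\vol_{\R^3}(\omega)$ for $\omega\in\Wedge^3\R^3$. As $\d x_0$, $N_0+Hx_0$ and $v$ all lie in $\R^3=\Span{\fo,\q}^{\perp}$, the five-fold wedge collapses to $\vol_{\R^3}(\d x_0,N_0+Hx_0,v)$, which equals $\ip{\d x_0\times(N_0+Hx_0),v}$ by definition of the cross product. Since $v$ was arbitrary, identifying the constant fields in $\fe(3)$ with $\R^3$ by evaluation and $\R^3$ with its dual via the metric gives $\mu_{x_0}\restr{\R^3}=[\d x_0\times(N_0+Hx_0)]$, as claimed.

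I expect no genuine obstacle: the real work---the identification $\mu_x=S[\eta_x^q]$ of \cref{th:9} and the affine-coordinate formula \eqref{eq:22} for the retraction form---is already in place, and what remains is to pair $\eta_x^q$ with the translational vector field $\q\wedge v$. The only point requiring care is the orientation bookkeeping, namely that the five-fold wedge $\det(\fo\wedge\q\wedge\cdots)$ reduce with the correct sign to the Euclidean volume form; applying $\vol_{M_{\q}}\restr{p}=i_{p\wedge\q}\det$ at $p=\fo$ settles this, and the remainder is routine.
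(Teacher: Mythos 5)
Your argument is correct and is essentially the paper's own derivation: evaluate $S(\eta_x^q)$ on $Y=\q\wedge v$ using the representative \eqref{eq:22}, observe that the second summand dies against the repeated factor of $\q$, and collapse the five-fold determinant to $\vol_{\R^3}(\d x_0,N_0+Hx_0,v)=\ip{\d x_0\times(N_0+Hx_0),v}$. The only cosmetic difference is your extension of $S$ to all of $\Wedge^2\lor$, which is unnecessary since both summands of \eqref{eq:22} already lie in $\Wedge^2\q^{\perp}$ (because $\ip{\q,\q}=0$ when $K=0$).
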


Similarly, the infinitesimal rotations around zero are
$\fso(3)=\Wedge^2\R^{3}\leq\Wedge^2\q^{\perp}=\fe(3)$.  For
$Y=v\wedge w\in\fso(3)$, we have
\begin{align*}
  S(\eta^q_x)(Y)&\equiv\det(\fo\wedge\eta^q_x\wedge v\wedge
  w)=
  -\vol_{\R^3}(\bigl((N_0\wedge\d
  x_0)x_0+H\ip{x_0,x_0}\d x_{0}\bigr),v,w)\\
  &=
  -\ip{\bigl((N_0\wedge\d
  x_0)x_0+H\ip{x_0,x_0}\d x_{0}\bigr),v\times w}.
\end{align*}
Thus, identifying $\fso(3)^{*}\cong\R^3$ via $\ev{u,v\wedge
  w}=\ip{u,v\times w}$, we have
\begin{prop}
  \label{th:13}
  Let $x_0\colon\Sigma\to\R^3$ have constant mean curvature $H$.
  The restriction of the moment class to $\fso(3)$ is
  represented by the \emph{torque form} $-(N_0\wedge\d
  x_0)x_0-H\ip{x_0,x_0}\d
  x_{0}\in\Omega^1_{\Sigma}\tens\R^{3}$:
  \begin{equation*}
     \mu_{x_0}\restr{\fso(3)}=[-(N_0\wedge\d
  x_0)x_0-H\ip{x_0,x_0}\d x_{0}].
  \end{equation*}
\end{prop}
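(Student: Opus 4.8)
The plan is to deduce this from our main identification \cref{th:9}, following the proof of \cref{th:12} but evaluating on rotations in place of translations. Since $\mu_{x_0}=S[\eta_x^q]$ by \cref{th:9}, it suffices to evaluate the de~Rham representative $S(\eta_x^q)$ on $Y\in\fso(3)$. Realising the infinitesimal rotations as $\fso(3)=\Wedge^2\R^3\leq\Wedge^2\q^{\perp}$, a general such $Y$ is a sum of decomposables $v\wedge w$ with $v,w\in\R^3=\Span{\fo,\q}^{\perp}$, so by linearity I would treat $Y=v\wedge w$. Feeding the split-coordinate expression \eqref{eq:22} for $\eta_x^q$ into the definition \eqref{eq:14} of $S$ gives
\begin{equation*}
  S(\eta_x^q)(Y)\equiv\det(\fo\wedge\eta_x^q\wedge v\wedge w)\mod\d\Omega^0_{\Sigma}.
\end{equation*}

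The key step, and what distinguishes rotations from translations, is that only the $\q$-component of \eqref{eq:22} contributes. Indeed, the first summand $\d x_0\wedge(N_0+Hx_0)$ lies in $\Wedge^2\R^3$, so wedging it with $\fo\wedge v\wedge w$ pairs $\fo$ with the four vectors $\d x_0,N_0+Hx_0,v,w$, all lying in the $3$-dimensional space $\R^3$; a four-fold wedge in $\R^3$ vanishes, so this summand drops out. (By contrast, in \cref{th:12} the choice $Y=\q\wedge v$ annihilates the $\q$-component through a repeated factor of $\q$, and it is the first summand that survives.) Hence only the second summand $\bigl((N_0\wedge\d x_0)x_0+H\ip{x_0,x_0}\d x_0\bigr)\wedge\q$ of \eqref{eq:22} matters.

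It then remains to rewrite the surviving determinant classically and to fix signs. Relocating the factor $\q$ past the $\R^3$-valued coefficient of that summand costs a single transposition, so that, using $\vol_{\R^3}=i_{\fo\wedge\q}\det$,
\begin{equation*}
  S(\eta_x^q)(Y)\equiv-\vol_{\R^3}\bigl((N_0\wedge\d x_0)x_0+H\ip{x_0,x_0}\d x_0,\,v,\,w\bigr).
\end{equation*}
Since $\vol_{\R^3}(u,v,w)=\ip{u,v\times w}$, the right-hand side is $-\ip{(N_0\wedge\d x_0)x_0+H\ip{x_0,x_0}\d x_0,\,v\times w}$, and the identification $\fso(3)^{*}\cong\R^3$ via $\ev{u,v\wedge w}=\ip{u,v\times w}$ then reads off the asserted representative $-(N_0\wedge\d x_0)x_0-H\ip{x_0,x_0}\d x_0$. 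I expect the only genuine obstacle to be the orientation and sign bookkeeping in this last step: pinning the convention $\vol_{\R^3}=i_{\fo\wedge\q}\det$ consistent with \eqref{eq:11} and \cref{th:12}, and tracking the single sign change from moving $\q$. Everything else reduces to the degree count above and the previously recorded identity \eqref{eq:22}.
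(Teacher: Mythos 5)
Your proposal is correct and follows essentially the same route as the paper: apply the definition \eqref{eq:14} of $S$ to the representative \eqref{eq:22}, note that only the $\q$-component survives against $Y=v\wedge w\in\Wedge^2\R^3$, and convert the resulting determinant to $-\vol_{\R^3}(\,\cdot\,,v,w)=-\ip{\,\cdot\,,v\times w}$ with the single transposition sign from moving $\q$. The only difference is that you spell out the degree-count reason why the first summand of \eqref{eq:22} drops out (four vectors in the $3$-dimensional $\Span{\fo,\q}^{\perp}$), which the paper leaves implicit.
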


When $K\neq 0$, we get more uniform statements.  Firstly,
$\g_{M_{\q}}=\Wedge^2\q^{\perp}$ is identified with its dual
using the metric\footnote{This is the negative of $\ip{\,,\,}_{\g}$
discussed above.} on $\q^{\perp}$ induced from that of
$\q^{\perp}$.  Secondly, take $\fo=\q/K$ and, for any
$x\colon\Sigma\to M_{\q}$, write
\begin{equation*}
  x=x_0+\fo
\end{equation*}
to get $x_0\colon\Sigma\to\q^{\perp}$ taking values in a
(pseudo-)sphere: $\ip{x_0,x_0}=1/K$ with normal $N_{0}=N$
(since $N$ is already orthogonal to $\fo$ in this case).
Then, when $x_0$ has constant mean curvature $H$, starting
from
\begin{equation*}
  \eta_x^q=(m-H)\d x_{0}\wedge \xh_0, 
\end{equation*}
we conclude after a computation:
\begin{prop}
  \label{th:14}
  Let $x_0$ have constant mean curvature $H$ in a
  (pseudo-)sphere of curvature $K$ in $\R^{p,q}$, where
  $(p,q)=(4,0)$ or $(3,1)$ according to the sign of
  $K$. Then the moment class of $x_0$ is represented by the
  \emph{moment form}
  $(Kx_0-HN)\wedge *\d
  x_0\in\Omega^1_{\Sigma}\tens\Wedge^2\R^{p,q}$:
  \begin{equation*}
    \mu_{x_0}=[(Kx_0-HN)\wedge *\d
    x_0],
  \end{equation*}
  where $*$ is the Hodge star operator of $\Sigma$.
\end{prop}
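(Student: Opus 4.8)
The plan is to feed the explicit retraction form into \cref{th:9} and then translate the abstract isomorphism $S$ into the classical Hodge star. First I would record the shape of $\eta_x^q$ in the case at hand. Writing $x=x_0+\fo$ with $\fo=\q/K$ and $\xh=\xh_0+\fo$, the defining relation \eqref{eq:18} together with the quadratic \eqref{eq:17} makes the $\q$-components of $\xh_0$ cancel, leaving $\xh_0=\tfrac{1}{m-H}(N+Hx_0)$; hence the form $\eta_x^q=(m-H)\,\d x_0\wedge\xh_0$ collapses to
\[
  \eta_x^q=\d x_0\wedge(N+Hx_0),
\]
a $1$-form valued in $\Wedge^2\q^\perp=\g_{M_{\q}}$, since $\d x_0$, $N$ and $x_0$ all take values in $\q^\perp$. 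By \cref{th:9} the remaining task is purely algebraic: evaluate $S$ on this form and read off a $\g_{M_{\q}}$-valued representative under the metric identification.

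For the algebra I would mirror the method used for \cref{th:12,th:13}, pairing $S(\eta_x^q)$ against a decomposable $Y=a\wedge b\in\Wedge^2\q^\perp$ via \eqref{eq:14}: modulo exact forms, $S(\eta_x^q)(Y)\equiv\det(\fo\wedge\eta_x^q\wedge a\wedge b)=\tfrac1K\det(\q\wedge\d x_0\wedge(N+Hx_0)\wedge a\wedge b)$. The key structural observation is that, along $x_0$, the space $\q^\perp$ carries an adapted orthonormal frame consisting of the two surface tangents, the surface normal $N$, and the sphere normal proportional to $x_0$, and that the Hodge star of $\q^\perp$ interchanges the tangent $2$-plane with the normal $2$-plane $\Span{x_0,N}$. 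This is exactly the mechanism behind the remark after \eqref{eq:14} that $S$ is, up to scale, the Hodge star: it converts the ``tangent $\wedge$ normal'' form $\d x_0\wedge(N+Hx_0)$ into a ``normal $\wedge$ tangent'' form in which the two normal directions recombine as $Kx_0-HN$ while the tangential factor $\d x_0$ is rotated within the tangent plane, producing $*\d x_0$ with $*$ the Hodge star of $\Sigma$.

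Carrying this out in the adapted frame should reproduce the asserted moment form $(Kx_0-HN)\wedge*\d x_0$. I expect the main obstacle to lie not in this structure but in the bookkeeping of constants and signs. One must pin the overall scalar to exactly $1$ by tracking the normalisation of $\det$ fixed through $\vol_{M_{\q}}=i_{p\wedge\q}\det$ together with the choice $\fo=\q/K$ (a careful accounting of the factors of $\sqrt{|K|}$ entering through $\fo$ and through the unit sphere normal is the crux here), reconcile the orientations of $\Sigma$ and of $N_pM_{\q}$, and---most delicately---treat both signatures uniformly: for $K>0$ the sphere normal $x_0$ is spacelike in $\R^{4}$ whereas for $K<0$ it is timelike in $\R^{3,1}$, so several Hodge signs flip between the two regimes. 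Verifying that the single expression $(Kx_0-HN)\wedge*\d x_0$ survives both cases with the same normalisation is where the computation must be done with genuine care.
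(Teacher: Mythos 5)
Your proposal follows the paper's own route: the paper likewise starts from $\eta_x^q=(m-H)\,\d x_0\wedge\xh_0$, notes $\xh_0=\tfrac1{m-H}(N+Hx_0)$, and then simply says ``we conclude after a computation'' applying $S$ (i.e.\ the Hodge star of $\q^{\perp}$, with $\fo=\q/K$) and \cref{th:9}. Your reduction and your identification of where the sign/normalisation care is needed are correct, so this is essentially the same argument, carried out to at least the level of detail the paper itself provides.
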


\section{Noether's theorem}
\label{sec:noethers-theorem}

This section attempts to partially resolve the somewhat
mysterious relationship between the moment class
\begin{equation*}
  \mu_x\in H^{k-1}(\Sigma,\R)\otimes\g_M^*
\end{equation*}
for a constant mean curvature hypersurface
$x\colon \Sigma^k\to M^{k+1}$ in a Riemannian manifold in the case
$k=2$ and the retraction class
\begin{equation*} [\eta^q_x]\in H^{1}(\Sigma,\R)\otimes \frak{c}_{M}^*
\end{equation*}
for a globally isothermic surface $(x,q)\colon\Sigma^2\to M^n$
into a conformal manifold.  We shall show that both classes
have a variational origin as Noether currents.

We begin with a brief description of Noether analysis suitable
for our discussion based on \cite{MR1701599}. Let
$\Sigma^k$, $M^n$ be manifolds and
$\mathcal{F}\subset C^{\infty}(\Sigma,M)$ an open subset
which, in our examples, will be the set of immersions
$\Sigma\to M$.
\begin{defn}
  A {\em Lagrangian} is a smooth map
  \begin{equation*}
    \mathcal{L}\colon \mathcal{F}\to W,
  \end{equation*}
  where $W$ is a manifold.
\end{defn}
In this context one is interested in the {\em critical
  locus} of $\mathcal{L}$, that is, maps $x\in\mathcal{F}$
for which $\d^{\mathcal{F}}_{x}\mathcal{L}$ is not
surjective.

In many but not all cases, Lagrangians
$\mathcal{L}\colon \mathcal{F}\to \R$ factorise
\begin{equation*}
  \mathcal{L}(x)=\int_{\Sigma} \ell_x
\end{equation*}
via a {\em Lagrangian density}
$\ell\colon \mathcal{F}\to \Omega^{k}_{\Sigma}$. In this
setting, $x\in\mathcal{F}$ is critical if and only
if
\begin{equation*}
  \d^{\mathcal{F}}_{x}\ell(\dot{x})=0,
\end{equation*}
for all tangent vectors (variations)
$\dot{x}\in T_x\mathcal{F}=\sect{x^*TM}$.
Thus, the critical points of $\mathcal{L}$ are determined by
the $\Omega^k_{\Sigma}$-valued $1$-form
\begin{equation*}
  \d^{\mathcal{F}}\ell\in
  \Omega^1_{\mathcal{F}}(\Omega^k_{\Sigma})
\end{equation*}
which motivates
\begin{defn}
  A {\em variational operator} is an
  $\Omega^k_{\Sigma}$-valued $1$-form on $\mathcal{F}$, that
  is,
  \begin{equation*}
    \Dif\in \Omega^1_{\mathcal{F}}(\Omega^k_{\Sigma}).
  \end{equation*}
  Thus, for each $x\in\mathcal{F}$ we have a linear map
  \begin{equation*}
    \Dif_x\colon \sect{x^*TM}\to \Omega^k_{\Sigma}
  \end{equation*}
  which we assume to be a differential operator. We call
  $x\in\mathcal{F}$ a {\em critical point} of $\Dif$ if
  \begin{equation*}
    \Dif_x(V)=0
  \end{equation*}
  for all $V\in \sect{x^*TM}$.
\end{defn}
We note from the discussion above that, for factorisable
Lagrangians $\mathcal{L}$, the variational operator
$\Dif$ is the derivative $\d^{\mathcal{F}}\ell$ of the Lagrangian
density $\ell$.
\begin{lem}[c.f.\ \cite{MR1701599}*{\S2.4}]\label{lem:EL}
  Let $\Dif\in \Omega^1_{\mathcal{F}}(\Omega^k_{\Sigma})$ be a
  variational operator for which $\Dif_x$ is first order, for
  every $x\in\mathcal{F}$. Then there exist unique $1$-forms
  \begin{equation*}
    \beta\in\Omega^1_{\mathcal{F}}(
    \Omega^{k-1}_{\Sigma})\,\quad\text{and}\quad
    E\in\Omega^1_{\mathcal{F}}(\Omega^{k}_{\Sigma})
  \end{equation*}
  such that for $x\in\mathcal{F}$ we have
  \begin{compactitem}
  \item
    $\beta_x\colon \sect{x^*TM}\to \Omega^{k-1}_{\Sigma}$
    and $E_x\colon \sect{x^*TM}\to \Omega^{k}_{\Sigma}$ are
    $C^{\infty}(\Sigma,\R)$-linear and so tensorial.
  \item For all $V\in \sect{x^*TM}$,
    \begin{equation}\label{eq:23}
      \Dif_x(V)=\d(\beta_x(V))+E_x(V).
    \end{equation}
  \end{compactitem}
  The $1$-form
  $E\in \Omega^1(\mathcal{F},\Omega^{k}_{\Sigma})$ is called
  the {\em Euler-Lagrange operator} and the $1$-form
  $\beta\in\Omega^1_{\mathcal{F}}( \Omega^{k-1}_{\Sigma})$
  the {\em variational $1$-form}.
\end{lem}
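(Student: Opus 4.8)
The plan is to carry out integration by parts canonically, working entirely at the level of principal symbols so that both the existence of the decomposition and its uniqueness drop out of one piece of linear algebra. Since each $\Dif_x$ is a first-order differential operator $\sect{x^*TM}\to\Omega^k_\Sigma$, it has a well-defined principal symbol, a bundle map
\begin{equation*}
  \sigma_x\colon T^*\Sigma\tens x^*TM\to\Omega^k_\Sigma,
\end{equation*}
and I would lean on the standard fact that a differential operator of order $\le 1$ is tensorial (that is, $C^\infty(\Sigma,\R)$-linear) exactly when its principal symbol vanishes. Thus it suffices to produce a tensorial $\beta_x$ whose exterior derivative reproduces $\sigma_x$: the remainder $E_x:=\Dif_x-\d\circ\beta_x$ then has vanishing symbol and is automatically tensorial.

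The algebraic heart of the matter is that $\Sigma$ is $k$-dimensional, so $\Omega^k_\Sigma=\Wedge^kT^*\Sigma$ is a line bundle. First I would record the linear-algebra fact that the wedge map
\begin{equation*}
  w\colon\Wedge^{k-1}T^*\Sigma\to\Hom(T^*\Sigma,\Wedge^kT^*\Sigma),\qquad
  w(\beta)(\xi)=\xi\wedge\beta,
\end{equation*}
is a bundle isomorphism: both sides have rank $k$, and $w$ is injective because evaluating $w(\beta)$ on the elements of a coframe recovers, up to sign, the components of $\beta$, so $w(\beta)=0$ forces $\beta=0$. This isomorphism is precisely what makes top-degree integration by parts unambiguous, and it is the only genuinely load-bearing ingredient.

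With $w$ in hand the construction is forced. For fixed $x$ and at each point of $\Sigma$, the partial symbol $\xi\mapsto\sigma_x(\xi\tens V)$ is an element of $\Hom(T^*\Sigma,\Wedge^kT^*\Sigma)$ depending linearly on $V\in x^*TM$; applying $w^{-1}$ defines $\beta_x(V):=w^{-1}\bigl(\sigma_x(\,\cdot\,\tens V)\bigr)\in\Wedge^{k-1}T^*\Sigma$, a bundle map in $V$ and hence a tensorial operator $\sect{x^*TM}\to\Omega^{k-1}_\Sigma$. Since $\d\circ\beta_x$ has principal symbol $\xi\mapsto\xi\wedge\beta_x(V)=\sigma_x(\xi\tens V)$ by construction, the operator $E_x:=\Dif_x-\d\circ\beta_x$ has vanishing symbol and is therefore tensorial, giving \eqref{eq:23}. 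That $\beta$ and $E$ assemble into genuine smooth $1$-forms on $\mathcal F$ is inherited from $\Dif$, because the whole prescription is the fixed algebraic operation $w^{-1}$ applied pointwise to the symbol of the smooth $1$-form $\Dif$.

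Uniqueness runs along the same lines and is where $w$ does its second job. Given any decomposition $\Dif_x(V)=\d(\beta'_x(V))+E'_x(V)$ with both terms tensorial, passing to symbols kills $E'_x$ and leaves $\sigma_x(\xi\tens V)=\xi\wedge\beta'_x(V)$, so $w(\beta'_x(V))=w(\beta_x(V))$ and injectivity of $w$ forces $\beta'_x=\beta_x$; then $E'_x=\Dif_x-\d\circ\beta'_x=E_x$. I expect the isomorphism $w$ to be the crux of the argument, encoding that top-degree integration by parts is canonical; beyond it, the only point needing (routine) care is smoothness and naturality in $x\in\mathcal F$, which causes no trouble since the construction never leaves the pointwise symbol calculus.
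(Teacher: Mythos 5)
Your proposal is correct and follows essentially the same route as the paper: the paper also extracts the first-order symbol $S_{\d f}$ via the Leibniz-type identity $\Dif_x(fV)=f\Dif_xV+S_{\d f}V$, converts it to $\beta_x$ by what it calls ``contraction'' (your isomorphism $w\colon\Wedge^{k-1}T^*\Sigma\cong\Hom(T^*\Sigma,\Wedge^kT^*\Sigma)$, made explicit), checks that the remainder $E_x$ is tensorial, and deduces uniqueness from $\d f\wedge(\beta_x(V)-\tilde\beta_x(V))=0$, i.e.\ the injectivity of $w$. Your write-up merely spells out the linear algebra behind the paper's one-line ``contraction'' step.
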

\begin{proof}
  Since $\Dif_x$ is a first order linear differential operator,
  we have
  \begin{equation*}
    \Dif_x (fV)=f\Dif_x V +S_{\d f}V
  \end{equation*}
  where the bundle homomorphism
  $S\colon T^*\Sigma\to\Hom(x^*TM, \wedge^k T^*\Sigma))$ is the
  symbol of $\Dif_x$. Contraction then gives a bundle
  homomorphism
  $\beta_x\colon x^*TM\to \wedge^{k-1} T^*\Sigma$. For
  $V\in\sect{x^*TM}$ one then easily checks that
  \begin{equation*}
    E_x(V):=\Dif_x V-\d(\beta_x(V))
  \end{equation*}
  is $C^{\infty}(\Sigma,\R)$-linear as required. Uniqueness
  follows since any other decomposition of $\Dif$ by
  $\tilde{\beta}$, $\tilde{E}$ gives
  $\d f\wedge (\beta_x(V)-\tilde{\beta}_x(V))=0$, for all
  $f\in C^{\infty}(\Sigma,\R)$ and $V\in\sect{x^*TM}$.
\end{proof}
We thus get a version of the Euler-Lagrange condition for
critical points in our setup:
\begin{cor}\label{cor:critical}
  $x\in \mathcal{F}$ is a critical point for the variational
  operator $\Dif$ if and only if
  \begin{equation*}
    E_x=0.
  \end{equation*}
\end{cor}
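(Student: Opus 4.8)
The plan is to read the corollary off directly from the Euler--Lagrange splitting of \cref{lem:EL}. That lemma records, for each $x$, the decomposition
\[
  \Dif_x(V)=\d\bigl(\beta_x(V)\bigr)+E_x(V),\qquad V\in\sect{x^{*}TM},
\]
with $\beta_x$ and $E_x$ tensorial; in particular $E_x$ is a genuine bundle map $x^{*}TM\to\wedge^{k}T^{*}\Sigma$. The criticality of $x$ is the vanishing of the integrated first variation $\int_\Sigma\Dif_x(V)$ over all compactly supported variations $V$ (equivalently over all $V$ when $\Sigma$ is closed), so I would phrase everything in terms of this integral, consistent with the factorisation $\mathcal{L}(x)=\int_\Sigma\ell_x$ above.

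First I would integrate the splitting over $\Sigma$ and dispatch the exact term by Stokes' theorem: because $V$ is compactly supported in the interior (or $\Sigma$ has no boundary), $\int_\Sigma\d(\beta_x(V))=\int_{\partial\Sigma}\beta_x(V)=0$, leaving
\[
  \int_\Sigma\Dif_x(V)=\int_\Sigma E_x(V).
\]
The reverse implication is then immediate: if $E_x=0$ then $\Dif_x(V)=\d(\beta_x(V))$ is exact, so its integral vanishes for every admissible $V$ and $x$ is critical.

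For the forward implication I would use the displayed identity to turn criticality, $\int_\Sigma\Dif_x(V)=0$ for all $V$, into $\int_\Sigma E_x(V)=0$ for all $V$, and then appeal to the fundamental lemma of the calculus of variations. This is exactly where the tensoriality of $E_x$ is essential: since $E_x(V)_p$ depends only on the value $V_p$, writing $E_x(V)=\ev{e_x,V}$ for a section $e_x$ of $(x^{*}TM)^{*}\tens\wedge^{k}T^{*}\Sigma$ and choosing variations $V=fW$ concentrated in a small coordinate ball about an arbitrary point and pointing in an arbitrary direction forces the top-degree form $E_x(V)$ to vanish pointwise for each $V$; that is, $E_x=0$.

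The only real subtlety---more bookkeeping than genuine difficulty---is keeping the two pieces straight: one must ensure the boundary contribution $\d(\beta_x(V))$ is annihilated by the integration, which is precisely why one restricts to compactly supported variations or to a closed $\Sigma$, and one must check that the remaining operator to which the fundamental lemma is applied is the tensorial $E_x$ supplied by \cref{lem:EL}. No further regularity or analytic input is needed, since \cref{lem:EL} has already extracted $E_x$ as a pointwise bundle map.
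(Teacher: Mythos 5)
Your proposal is correct and follows the paper's own proof: integrate the splitting $\Dif_x(V)=\d(\beta_x(V))+E_x(V)$ over $\Sigma$ for compactly supported $V$ and use Stokes' theorem to reduce criticality to $\int_\Sigma E_x(V)=0$ for all such $V$. You are in fact slightly more complete than the paper, whose proof stops at the identity $\int_\Sigma\Dif_x(V)=\int_\Sigma E_x(V)$ and leaves implicit both the easy converse and the fundamental-lemma step in which the tensoriality of $E_x$ (supplied by \cref{lem:EL}) is used to pass from the vanishing of the integrals to $E_x=0$.
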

\begin{proof}
  Let $V\in \sect{x^*TM}$ be compactly supported. Then
  \eqref{eq:23} and Stokes' theorem give:
  \begin{equation*}
    \int_{\Sigma} \Dif_x(V)=\int_{\Sigma}
    \d(\beta_x(V))+\int_{\Sigma} E_x(V)=\int_{\Sigma} E_x(V).
  \end{equation*}
\end{proof}

Consider a Lagrangian $\mathcal{L}$ which factorises via a
Lagrangian density $\ell$ and assume we have a Lie group $G$
acting on $\mathcal{F}$ leaving $\ell$ (and thus also
$\mathcal{L})$ invariant, that is, $\ell_{gx}=\ell_x$ for
all $g\in G$. Differentiating with respect to $g$ we obtain
$\d^{\mathcal{F}}_{x}\ell(\mathcal{V})=0$ for the
fundamental vector fields
$\mathcal{V}\in\sect{T\mathcal{F}}$ of the group action.
\begin{defn}
  Let $\Dif\in\Omega^1_{\mathcal{F}}(\Omega^k_{\Sigma})$ be a
  variational operator. A vector field
  $\mathcal{V}\in\sect{T\mathcal{F}}$ is a {\em symmetry}
  of $\Dif$ if
  \begin{equation*}
    \Dif(\mathcal{V})=0.
  \end{equation*}
\end{defn}
\begin{thm}[Noether]
  If $\mathcal{V}\in\sect{T\mathcal{F}}$ is a symmetry of
  the variational operator $\Dif$ and $x\in \mathcal{F}$ a
  critical point of $\Dif$, then
  $\beta_x(\mathcal{V}_x)\in\Omega^{k-1}_{\Sigma}$ is
  closed.

  We call $\beta_x(\mathcal{V}_x)$ the {\em Noether current}
  of $\Dif$ for the symmetry $\mathcal{V}$.
\end{thm}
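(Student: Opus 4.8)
The plan is to combine the canonical decomposition of the variational operator furnished by \cref{lem:EL} with the two hypotheses, feeding them in one at a time. By that lemma, for every $V\in\sect{x^*TM}$ we have
\[
  \Dif_x(V)=\d(\beta_x(V))+E_x(V).
\]
Since this holds for all sections, I would evaluate it at the particular section $V=\mathcal{V}_x$, the value at the critical point $x$ of the fundamental vector field $\mathcal{V}$ of the symmetry.

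Next I would invoke the two hypotheses to annihilate two of the three terms. The symmetry condition $\Dif(\mathcal{V})=0$ gives, upon evaluation at $x$, that $\Dif_x(\mathcal{V}_x)=0$, which kills the left-hand side. The criticality of $x$, together with \cref{cor:critical}, gives $E_x=0$ and hence $E_x(\mathcal{V}_x)=0$, which kills the Euler--Lagrange term on the right. What remains is exactly
\[
  0=\d\bigl(\beta_x(\mathcal{V}_x)\bigr),
\]
which is precisely the assertion that the Noether current $\beta_x(\mathcal{V}_x)\in\Omega^{k-1}_{\Sigma}$ is closed.

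There is no genuine obstacle here: all the content is already packaged into \cref{lem:EL}, which supplies the decomposition $\Dif_x=\d\circ\beta_x+E_x$ with $\beta_x$ tensorial, and into \cref{cor:critical}, which reduces criticality to the vanishing of $E$. The only point meriting a moment's care is that the symmetry hypothesis $\Dif(\mathcal{V})=0$ is an identity between $\Omega^k_\Sigma$-valued objects on $\mathcal{F}$, whose pointwise specialisation at $x$ yields $\Dif_x(\mathcal{V}_x)=0$; this is immediate once one recalls that $\Dif$ is an $\Omega^k_{\Sigma}$-valued $1$-form on $\mathcal{F}$ and that $\mathcal{V}_x\in T_x\mathcal{F}=\sect{x^*TM}$ is an admissible argument for $\Dif_x$.
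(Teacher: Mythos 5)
Your argument is correct and is precisely the paper's proof: apply the decomposition $\Dif_x(V)=\d(\beta_x(V))+E_x(V)$ of \cref{lem:EL} to $V=\mathcal{V}_x$, use the symmetry hypothesis to kill $\Dif_x(\mathcal{V}_x)$ and criticality (via \cref{cor:critical}) to kill $E_x(\mathcal{V}_x)$, leaving $\d(\beta_x(\mathcal{V}_x))=0$. No differences worth noting.
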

\begin{proof}
  Since $x$ is critical and $\mathcal{V}$ is a symmetry,
  both $E_x(\mathcal{V}_{x})$ and $\Dif_x(\mathcal{V}_{x})$
  vanish and so the result is immediate from \eqref{eq:23}.
\end{proof}

In the remainder of this section we apply our Noether
analysis to CMC hypersurfaces and globally isothermic
surfaces to show that the moment and retraction classes
arise as Noether currents.

\subsection{Constant mean curvature hypersurfaces}
Let $\mathcal{F}$ be the space of immersions
$x\colon \Sigma^k\to M^{n}$ into a Riemannian manifold. We
additionally assume
\begin{itemize}
\item $\Sigma=\partial \hat{\Sigma}$ is the boundary of a
  manifold $\hat{\Sigma}^{k+1}$;
\item $x$ extends to an immersion 
  $\hat{x}\colon\hat{\Sigma}\to M$ so that
  $\hat{x}_{|\Sigma}=x$.
\end{itemize}
Consider the Lagrangian
\begin{equation*}
  \mathcal{L}(x)=\int_{\Sigma} \vol_x +\lambda
  \int_{\hat{\Sigma}}\vol_{\hat{x}}
\end{equation*}
with a Lagrange multiplier $\lambda\in\R$. Critical points
of $\mathcal{L}$ extremise the induced volume on $\Sigma$
constrained by the induced enclosed volume on
$\hat{\Sigma}$.  For a variation
$V=\d x(V^{\top})+V^{\perp}\in\sect{x^*TM}$, we obtain
\begin{equation*}
  \d^{\mathcal{F}}_{x}\vol(V)=\d(i_{V^{\top}}\vol_x)-k(\mcv, V^{\perp})\vol_x
\end{equation*}
with $\mcv\in\sect{N\Sigma}$ the mean curvature vector
field.

Now suppose that  $n=k+1$, that is, $x$ is a
hypersurface immersion, and that
$\hat{V}\in\sect{\hat{x}^*TM}$ extends $V$.  Then
\begin{equation*}
  \d^{\mathcal{F}}_{\hat{x}}\vol(\hat{V})=\d(i_{\hat{V}}\vol_{\hat{x}})
\end{equation*}
since there is no normal component. We therefore arrive at
\begin{equation*}
  \d^{\mathcal{F}}_x\mathcal{L}(V)=\int_{\Sigma}
  \d(i_{V^{\top}}\vol_x)-k(\mcv,V^{\perp})\vol_x+\lambda(V^{\perp},N)\vol_x
\end{equation*}
where we used
$i_{\hat{V}}\vol_{\hat{x}_{|\Sigma}}=(V^{\perp},N)\vol_x$
for the unit normal field $N\in\sect{N\Sigma}$ of
$x\colon \Sigma\to M$.

We can now read off the ingredients of \cref{lem:EL}: the variational operator
$\Dif\in\Omega^1_{\mathcal{F}}(\Omega^k_{\Sigma})$ for $\mathcal{L}$ reads
\begin{equation}\label{eq:varop-cmc}
  \Dif_x(V)= \d(i_{V^{\top}}\vol_x)-k(\mcv,V^{\perp})\vol_x+\lambda(V^{\perp},N)\vol_x,
\end{equation}
for $V\in T\mathcal{F}_x=\sect{x^*TM}$, while
\begin{align*}
  \beta_x(V)&=i_{V^{\top}}\vol_x\\
  E_x(V)&=(-k(\mcv,V^{\perp})+\lambda(V^{\perp},N))\vol_x.
\end{align*}
In particular, $x\in\mathcal{F}$ is a critical point of
$\mathcal{L}$ if and only if
\begin{equation*}
  k\mcv=\lambda N\quad\text{equivalently}\quad \lambda=k
  H,
\end{equation*}
that is, $x$ has constant mean curvature $H$.

Now let $V=Y\circ x$ for a Killing field $Y$ on $M$.  Then
$V$ is not, in general, a symmetry of $\mathcal{L}$ since we
have formulated the volume as a boundary integral.  However,
$V$ \emph{is} a symmetry for the area functional so that, for
any immersion $x$ we have the identity \eqref{eq:1}
\begin{equation*}
  \d(i_{Y^{\top}}\vol_x)=k(Y\circ x,\mcv)\vol_x
\end{equation*}
of \cref{th:1}, which, as we have seen, yields the moment
class $\mu_x$ under appropriate conditions on the cohomology
of $M$.

\subsection{Isothermic surfaces}
Let $\Sigma^2$ be an oriented compact surface without
boundary.  We collect some basic facts about Teichm\"uller
space following Bohle--Peters--Pinkall \cite{BohPetPin08}.
Consider
\begin{equation*}
  \mathcal{T}=\set{J\in\sect{\End(T\Sigma)}\st
  J^2=-1}/\Diff_0(\Sigma),
\end{equation*}
the space of equivalence classes of complex structures $J$
on $\Sigma$ modulo diffeomorphisms isotopic to the identity.
Then:
\begin{itemize}
\item The tangent space at $[J]$ to $ \mathcal{T}$ is
  \begin{equation*}
    T_{[J]}\mathcal{T}=\set{T\in \sect{\End(T\Sigma})\st 
    TJ=-JT}/\set{L_X J\st X\in \sect{T\Sigma}}.
  \end{equation*}
\item The cotangent space at $[J]$ to $ \mathcal{T}$ is
  \begin{equation*}
    T^{*}_{[J]}\mathcal{T}=\set{\sigma\in\sect{\Hom(T\Sigma,T^*\Sigma)}\st 
    J^{t}\sigma=\sigma J,\, \d^{\nabla} \sigma=0}
  \end{equation*}
  where $\nabla$ is any torsion-free connection with $\nabla
  J=0$. As we remarked above, the map
  \begin{equation*}
    H^0(K_{\Sigma}^2)\to T^{*}_{[J]}\mathcal{T}\colon
    q\mapsto \sigma=2\Re q
  \end{equation*}
  is a linear isomorphism once we interpret
  $\sigma\in T^{*}_{[J]}\mathcal{T}$ as a trace-free
  symmetric bilinear form
  $\sigma\in\sect{S^2_{0} T^*\Sigma}$.
\item The $L^2$-pairing
  \begin{equation*}
    T^{*}_{[J]}\mathcal{T}\times T_{[J]}\mathcal{T}\to
    \R\colon (\sigma,[T])\mapsto
    \int_{\Sigma}\ev{\sigma\wedge T}
  \end{equation*}
  is well-defined and non-degenerate. We view $\sigma$ and
  $T$ as $T^{*}\Sigma$ respectively $T\Sigma$ valued $1$-forms
  and the wedge product is over the evaluation pairing
  $\ev{\,,\,}\colon T^*\Sigma\times T\Sigma\to \R$.
\end{itemize}

Now let $\mathcal{F}$ be the space of immersions
$x\colon \Sigma^2\to M^n$ of a compact oriented surface into a {\em
  conformal} manifold.  The conformal structure on $\Sigma$
induced by $x$ is equivalent to a complex structure $J_x$
given by anticlockwise rotation by $\pi/2$.  Consider the Lagrangian
\begin{equation*}
  \mathcal{L}\colon \mathcal{F}\to \mathcal{T},\quad
  \mathcal{L}(x)=[J_x].
\end{equation*}
It is shown in \cite{BohPetPin08} that the critical points
of $\mathcal{L}$, that is, the points $x$ where
$\d^{\mathcal{F}}_x\mathcal{L}$ fails to surject, are
precisely the globally isothermic immersion
$(x,q)\colon \Sigma\to M$. In more detail, from
\cite{BohPetPin08}, for $V\in\sect{x^*TM}$,
\begin{equation*}
  \d^{\mathcal{F}}_x\mathcal{L}(V)=[L_{V^{\top}}J_x+2A_0^{V^{\perp}}J_x].
\end{equation*}
Thus, $x\in\mathcal{F}$ is a
critical point of $\mathcal{L}$ if and only if there exists
a non-zero $\sigma\in T_{[J_x]}\mathcal{T}^{*}$ with
\begin{equation*}
  \int_{\Sigma} \ev{\sigma\wedge
  L_{V^{\top}}J_x+2A_0^{V^{\perp}}J_x}=0
\end{equation*}
for all $V\in\sect{x^*TN}$.  Hence the critical points of
our Lagrangian $\mathcal{L}$ are determined via the
variational operator
$\Dif_x\colon \sect{x^*TM}\to \Omega^2_{\Sigma}$ given by
\begin{equation*}
  \Dif_x(V)=\ev{\sigma\wedge
  L_{V^{\top}}J_x+2A_0^{V^{\perp}}J_x}.
\end{equation*}
Now $\ev{\sigma\wedge
  L_{V^{\top}}J_{x}}=2\ev{\sigma\wedge\nabla(J_xV^{\top})}$
so the Leibniz rule and $\d^{\nabla}\sigma=0$ give
\begin{equation*}
  \Dif_{x}(V)=-2\d\ev{\sigma,
  J_xV^{\top}}+2\ev{\sigma\wedge
  A^{V^{\perp}}_0J_x}.
\end{equation*}
Let us rewrite this in more familiar terms. Set
$Q=J_{x}^{t}\sigma$ and recall that $A_0$ anti-commutes with
$J_{x}$ to conclude that
\begin{equation*}
  \Dif_x(V)=-2\d\ev{Q,V^{\top}}-2\ev{Q\wedge A_{0}^{V^{\top}}}.
\end{equation*}
From this we read off the variational $1$-form
$\beta_x(V)=-2\ev{Q,V^{\top}}$ and the
Euler--Lagrange operator
$E_x(V)=-2\ev{Q\wedge
A^{V^{\perp}}_0}$. Thus we learn:
\begin{compactenum}
\item $x\in\mathcal{F}$ is a critical point of $\mathcal{L}$
  if and only if
  $\ev{Q\wedge A^{V^{\perp}}_0}=0$ for all
  $V\in\sect{x^*TM}$.  With $q=Q^{2,0}$, this is precisely
  the condition \eqref{eq:6} that $(x,q)$ be globally isothermic.
\item If $x$ is a critical point and $V=Y\circ x$ for a
  conformal vector field $Y\in\conf_{M}$ (which is
  clearly a
  symmetry for $\Dif_x$) then $\beta_x(Y\circ x)=-2\ev{Q,Y^{\top}}=-2\eta_Y$
  is essentially the retraction form and closed:
  \begin{equation*}
    \d\eta_{x}^q=0.
  \end{equation*}
\end{compactenum}
This gives another proof of \cref{th:4} and shows that, up to
a factor of $-\half$, $\eta^q_x$ is the Noether current of
our Lagrangian.

\begin{bibdiv}

  \begin{biblist}

\bib{Bia05}{article}{
  author={Bianchi, L.},
  title={Ricerche sulle superficie isoterme e sulla deformazione delle quadriche},
  date={1905},
  journal={Ann. di Mat.},
  volume={11},
  pages={93\ndash 157},
}

\bib{Bia05a}{article}{
  author={Bianchi, L.},
  title={Complementi alle ricerche sulle superficie isoterme},
  date={1905},
  journal={Ann. di Mat.},
  volume={12},
  pages={19\ndash 54},

}

\bib{BohPetPin08}{article}{
  author={Bohle, Christoph},
  author={Peters, G.~Paul},
  author={Pinkall, Ulrich},
  title={Constrained {W}illmore surfaces},
  date={2008},
  issn={0944-2669},
  journal={Calc. Var. Partial Differential Equations},
  volume={32},
  number={2},
  pages={263\ndash 277},
  doi={10.1007/s00526-007-0142-5},
  url={http://dx.doi.org/10.1007/s00526-007-0142-5},
  review={\MR {MR2389993 (2009a:53098)}},
}

\bib{bo60}{article}{
  author={Bonnet, Ossian},
  title={M\'emoire sur l’emploi d’un nouveau syst\`eme de variables dans l’\'etude des propri\'et\'es des surfaces courbes},
  journal={Journal de Math\'ematiques Pures et Appliqu\'ees},
  publisher={Gauthier-Villars},
  volume={2e s{\'e}rie, 5},
  date={1860},
  pages={156--266},
  url={http://portail.mathdoc.fr/JMPA/afficher_notice.php?id=JMPA_1860_2_5_A15_0},
}

\bib{Bou62a}{article}{
  author={Bour, Edmond},
  title={Th\'eorie de la d\'eformation des surfaces},
  date={1862},
  journal={J. L'\'Ecole Imp\'eriale Polytechnique},
  pages={1\ndash 148},
}

\bib{Bur06}{incollection}{
  author={Burstall, F.~E.},
  title={Isothermic surfaces: conformal geometry, {C}lifford algebras and integrable systems},
  date={2006},
  booktitle={Integrable systems, geometry, and topology},
  series={AMS/IP Stud. Adv. Math.},
  volume={36},
  publisher={Amer. Math. Soc.},
  address={Providence, RI},
  pages={1\ndash 82},
  review={\MR {MR2222512 (2008b:53006)}},
}

\bib{BurDonPedPin11}{article}{
  author={Burstall, Francis~E.},
  author={Donaldson, Neil~M.},
  author={Pedit, Franz},
  author={Pinkall, Ulrich},
  title={Isothermic submanifolds of symmetric {$R$}-spaces},
  date={2011},
  issn={0075-4102},
  journal={J. Reine Angew. Math.},
  volume={660},
  pages={191\ndash 243},
  doi={10.1515/crelle.2011.075},
  url={http://dx.doi.org/10.1515/crelle.2011.075},
  review={\MR {2855825}},
}

\bib{MR3523116}{article}{
  author={Burstall, F.},
  author={Hertrich-Jeromin, U.},
  author={M\"{u}ller, C.},
  author={Rossman, W.},
  title={Semi-discrete isothermic surfaces},
  journal={Geom. Dedicata},
  volume={183},
  date={2016},
  pages={43--58},
  issn={0046-5755},
  review={\MR {3523116}},
  doi={10.1007/s10711-016-0143-7},
}

\bib{BurHerPedPin97}{article}{
  author={Burstall, F.},
  author={Hertrich-Jeromin, U.},
  author={Pedit, F.},
  author={Pinkall, U.},
  title={Curved flats and isothermic surfaces},
  date={1997},
  issn={0025-5874},
  journal={Math. Z.},
  volume={225},
  number={2},
  pages={199\ndash 209},
  doi={10.1007/PL00004308},
  url={http://dx.doi.org/10.1007/PL00004308},
  review={\MR {MR1464926 (98j:53004)}},
}

\bib{BurSan12}{article}{
  author={Burstall, F.~E.},
  author={Santos, S.~D.},
  title={Special isothermic surfaces of type {$d$}},
  date={2012},
  issn={0024-6107},
  journal={J. Lond. Math. Soc. (2)},
  volume={85},
  number={2},
  pages={571\ndash 591},
  doi={10.1112/jlms/jdr050},
  url={http://dx.doi.org/10.1112/jlms/jdr050},
  review={\MR {2901079}},
}

\bib{Cal03}{article}{
  author={Calapso, P.},
  title={Sulle superficie a linee di curvatura isoterme},
  date={1903},
  journal={Rendiconti Circolo Matematico di Palermo},
  volume={17},
  pages={275\ndash 286},
}

\bib{CieGolSym95}{article}{
  author={Cie{\'s}li{\'n}ski, Jan},
  author={Goldstein, Piotr},
  author={Sym, Antoni},
  title={Isothermic surfaces in {$\mathbf {E}^3$} as soliton surfaces},
  date={1995},
  issn={0375-9601},
  journal={Phys. Lett. A},
  volume={205},
  number={1},
  pages={37\ndash 43},
  doi={10.1016/0375-9601(95)00504-V},
  url={http://dx.doi.org/10.1016/0375-9601(95)00504-V},
  review={\MR {MR1352426 (96g:53005)}},
}

\bib{Dar99e}{article}{
  author={Darboux, G.},
  title={Sur les surfaces isothermiques},
  date={1899},
  journal={C.R. Acad. Sci. Paris},
  volume={128},
  pages={1299\ndash 1305, 1538},
}

\bib{Dar99}{article}{
  author={Darboux, G.},
  title={Sur une classe de surfaces isothermiques li\'ees \`a la d\'eformations des surfaces du second degr\'e},
  date={1899},
  journal={C.R. Acad. Sci. Paris},
  volume={128},
  pages={1483\ndash 1487},
}

\bib{Dar99b}{article}{
  author={Darboux, Gaston},
  title={Sur les surfaces isothermiques},
  date={1899},
  issn={0012-9593},
  journal={Ann. Sci. \'Ecole Norm. Sup. (3)},
  volume={16},
  pages={491\ndash 508},
  url={http://www.numdam.org/item?id=ASENS_1899_3_16__491_0},
  review={\MR {MR1508975}},
}

\bib{MR1701599}{article}{
  author={Deligne, Pierre},
  author={Freed, Daniel S.},
  title={Classical field theory},
  conference={ title={Quantum fields and strings: a course for mathematicians, Vol. 1, 2}, address={Princeton, NJ}, date={1996/1997}, },
  book={ publisher={Amer. Math. Soc., Providence, RI}, },
  date={1999},
  pages={137--225},
  review={\MR {1701599}},
}

\bib{MR2827821}{article}{
  author={Fern\'{a}ndez, Isabel},
  author={Mira, Pablo},
  title={Constant mean curvature surfaces in 3-dimensional Thurston geometries},
  conference={ title={Proceedings of the International Congress of Mathematicians. Volume II}, },
  book={ publisher={Hindustan Book Agency, New Delhi}, },
  date={2010},
  pages={830--861},
  review={\MR {2827821}},
}

\bib{Her03}{book}{
  author={Hertrich-Jeromin, Udo},
  title={Introduction to {M}\"obius differential geometry},
  series={London Mathematical Society Lecture Note Series},
  publisher={Cambridge University Press},
  address={Cambridge},
  date={2003},
  volume={300},
  isbn={0-521-53569-7},
  review={\MR {MR2004958 (2004g:53001)}},
}

\bib{HerTjaZue97}{article}{
  title={On Guichard's nets and Cyclic systems},
  author={Hertrich-Jeromin, U.},
  author={Tjaden,E.-H.},
  author={Zuercher, M. T},
  year={1997},
  eprint={arXiv:dg-ga/9704003},
  doi={10.48550/arXiv.dg-ga/9704003},
}

\bib{MR1013786}{book}{
  author={Hopf, Heinz},
  title={Differential geometry in the large},
  series={Lecture Notes in Mathematics},
  volume={1000},
  edition={2},
  note={Notes taken by Peter Lax and John W. Gray; With a preface by S. S. Chern; With a preface by K. Voss},
  publisher={Springer-Verlag, Berlin},
  date={1989},
  pages={viii+184},
  isbn={3-540-51497-X},
  review={\MR {1013786}},
  doi={10.1007/3-540-39482-6},
}

\bib{Kap90}{article}{
  author={Kapouleas, Nicolaos},
  title={Complete constant mean curvature surfaces in {E}uclidean three-space},
  date={1990},
  issn={0003-486X},
  journal={Ann. of Math. (2)},
  volume={131},
  number={2},
  pages={239\ndash 330},
  doi={10.2307/1971494},
  url={http://dx.doi.org/10.2307/1971494},
  review={\MR {MR1043269 (93a:53007a)}},
}

\bib{KorKus93}{article}{
  author={Korevaar, Nick},
  author={Kusner, Rob},
  title={The global structure of constant mean curvature surfaces},
  date={1993},
  issn={0020-9910},
  journal={Invent. Math.},
  volume={114},
  number={2},
  pages={311\ndash 332},
  doi={10.1007/BF01232673},
  url={http://dx.doi.org/10.1007/BF01232673},
  review={\MR {MR1240641 (95f:53015)}},
}

\bib{KorKusSol89}{article}{
  author={Korevaar, Nicholas~J.},
  author={Kusner, Rob},
  author={Solomon, Bruce},
  title={The structure of complete embedded surfaces with constant mean curvature},
  date={1989},
  issn={0022-040X},
  journal={J. Differential Geom.},
  volume={30},
  number={2},
  pages={465\ndash 503},
  url={http://projecteuclid.org/getRecord?id=euclid.jdg/1214443598},
  review={\MR {MR1010168 (90g:53011)}},
}

\bib{MR1081331}{article}{
  author={Kusner, Rob},
  title={Bubbles, conservation laws, and balanced diagrams},
  conference={ title={Geometric analysis and computer graphics}, address={Berkeley, CA}, date={1988}, },
  book={ series={Math. Sci. Res. Inst. Publ.}, volume={17}, publisher={Springer, New York}, },
  date={1991},
  pages={103--108},
  review={\MR {1081331}},
  doi={10.1007/978-1-4613-9711-3_11},
}

\bib{MR3525098}{article}{
  author={Meeks, William H., III},
  author={P\'{e}rez, Joaqu\'{\i }n},
  author={Tinaglia, Giuseppe},
  title={Constant mean curvature surfaces},
  conference={ title={Surveys in differential geometry 2016. Advances in geometry and mathematical physics}, },
  book={ series={Surv. Differ. Geom.}, volume={21}, publisher={Int. Press, Somerville, MA}, },
  date={2016},
  pages={179--287},
  review={\MR {3525098}},
}

\bib{PalTer88}{book}{
  author={Palais, Richard~S.},
  author={Terng, Chuu-Lian},
  title={Critical point theory and submanifold geometry},
  series={Lecture Notes in Mathematics},
  publisher={Springer-Verlag},
  address={Berlin},
  date={1988},
  volume={1353},
  isbn={3-540-50399-4},
  review={\MR {MR972503 (90c:53143)}},
}

\bib{Sch01}{article}{
  author={Schief, W.~K.},
  title={Isothermic surfaces in spaces of arbitrary dimension: integrability, discretization, and {B}\"acklund transformations---a discrete {C}alapso equation},
  date={2001},
  issn={0022-2526},
  journal={Stud. Appl. Math.},
  volume={106},
  number={1},
  pages={85\ndash 137},
  doi={10.1111/1467-9590.00162},
  url={http://dx.doi.org/10.1111/1467-9590.00162},
  review={\MR {MR1805487 (2002k:37140)}},
}

\bib{Smy04}{article}{
  author={Smyth, Brian},
  title={Soliton surfaces in the mechanical equilibrium of closed membranes},
  date={2004},
  issn={0010-3616},
  journal={Comm. Math. Phys.},
  volume={250},
  number={1},
  pages={81\ndash 94},
  review={\MR {MR2092030 (2006f:53112)}},
  doi={10.1007/s00220-004-1085-8},
}

  \end{biblist}

\end{bibdiv}

\end{document}